\newtheorem{theorem}{Theorem}[section]
\newtheorem{lemma}[theorem]{Lemma}
\newtheorem{corollary}[theorem]{Corollary}
\theoremstyle{definition}
\newtheorem{definition}[theorem]{Definition}
\theoremstyle{remark}
\newtheorem{remark}[theorem]{Remark}
\numberwithin{equation}{section}
\newcommand{\C}{\mathbb{C}}
\newcommand{\N}{\mathbb{N}}
\newcommand{\R}{\mathbb{R}}
\newcommand{\T}{\mathbb{T}}
\newcommand{\Z}{\mathbb{Z}}
\newcommand\cA{\mathcal{A}}
\newcommand\cE{\mathcal{E}}
\newcommand\cF{\mathcal{F}}
\newcommand\cH{\mathcal{H}}
\newcommand\cO{\mathcal{O}}
\newcommand\cP{\mathcal{P}}
\newcommand\cQ{\mathcal{Q}}
\newcommand\cU{\mathcal{U}}
\newcommand\fA{\mathfrak{A}}
\newcommand\Ad{\operatorname{Ad}}
\newcommand\Aut{\operatorname{Aut}}
\newcommand\id{\operatorname{id}}
\newcommand\ind{\operatorname{ind}}
\newcommand\tr{\operatorname{tr}}
\newcommand\inpr[2]{\langle{#1,#2}\rangle}
\newcommand\tG{\tilde{G}}
\begin{document}
\title[$E_0$-semigroups]{$E_0$-semigroups: around and beyond Arveson's work}

\dedicatory{Dedicated to the memory of Bill Arveson}

\author[Masaki Izumi]{Masaki Izumi}

\address{MASAKI IZUMI, Department of Mathematics\\ Graduate School of Science\\
Kyoto University\\ Sakyo-ku, Kyoto 606-8502\\ Japan}
\email{izumi@math.kyoto-u.ac.jp}

\begin{abstract} 
We give an account of the theory of $E_0$-semigroups.
We first focus on Arveson's contributions to the field and related results. 
Then we present the recent development of type II and type III $E_0$-semigroups. 
We also include a short note in Appendix, based on Arveson's observation, 
on noncommutative Poisson boundaries.
\end{abstract}

\begin{subjclass}{Primary 46L55; Secondary 46L57, 46L53.}
\end{subjclass}
\begin{keywords}{$E_0$-semigroups, product systems, $CP$-semigroups, dilation.}
\end{keywords}

\maketitle

\section{Introduction} 
Arveson opened up new frontiers in several areas in operator algebras, 
or more generally noncommutative analysis. 
He often provided essential ideas at the very beginning of new subjects, and then generously left further development 
to others, as probably seen from Kenneth Davidson's article, also in this volume. 
One specific example is the theory of $E_0$-semigroups, the main subject of this article, 
and the author is one of Arveson's followers who has luckily obtained substantial benefit from his ideas. 
These ideas were summarized as a monograph \cite{Arv2003-1} by himself, which is the standard reference for 
$E_0$-semigroups. 
Two main subjects of the monograph, among others, are product systems 
(infinite tensor product systems of Hilbert spaces) and the dilation theory of 
semigroups of completely positive maps, which reflects the importance of these subjects in the field well.  
Therefore it is natural for us to discuss them in this article too.

\begin{definition}
An $E_0$-semigroup $\alpha=\{\alpha_t\}_{t\geq 0}$ is a family of unital normal $*$-endomorphisms of 
a von Neumann algebra $M$ satisfying the following conditions: 
\begin{itemize}
\item[(i)] $\alpha_0=\id$, 
\item[(ii)] $\alpha_s\circ \alpha_t=\alpha_{s+t}$ for all $s,t\geq 0$, 
\item[(iii)] For every $a\in M$ and every normal functional $\varphi\in M_*$, 
the function $[0,\infty)\ni t\mapsto \varphi(\alpha_t(a))\in \C$ is continuous. 
\end{itemize}
\end{definition}

Throughout the article, we concentrate on $E_0$-semigroups acting on $B(H)$, the set of bounded operators 
of a separable infinite dimensional complex Hilbert space $H$. 
In this particular case the normality condition of $\alpha_t$ is redundant (see \cite[Chapter V, Theorem 5.1]{Ta}). 
Since $B(H)$ is believed to be the easiest infinite dimensional factor, one may wonder if something significant 
comes out of such objects. 
Yet, it turned out that $E_0$-semigroups on $B(H)$ are already rich and difficult, and they have deep connections with 
various areas in mathematics such as probability theory and classical analysis 
(not to mention functional analysis). 

Soon after Powers \cite{P88} initiated the systematic analysis of $E_0$-semigroups in the late 80's, Arveson \cite{Arv89-1} 
introduced the notion of a product system, which is the key concept for later development of the theory. 
In particular, it inspired the probabilistic approach of Tsirelson \cite{T2003-1}. 
Arveson \cite{Arv89-2},\cite{Arv90-2} showed that there is a one-to-one correspondence between 
the cocycle conjugacy classes of $E_0$-semigroups and the isomorphism classes of product systems, 
which reduced the classification problem of the former to that of the latter. 
Then he completely classified so-called type I $E_0$-semigroups by an invariant called index, 
which is the first substantial classification result in the theory. 
There are a lot of other important invariants for $E_0$-semigroups defined via the corresponding product systems now. 

In general, it is not so easy to construct new examples of $E_0$-semigroups. 
However, it is relatively easy to construct semigroups of unital completely positive (ucp) maps, and 
they give rise to $E_0$-semigroups through Bhat's dilation theorem \cite{B96}. 
This approach was extensively developed by Arveson \cite{Arv99} and Powers \cite{P2003-1}, \cite{P2003-2}, 
and it is one of the most important constructions of $E_0$-semigroups. 

For the proofs and related results of the statements we present in this article, the reader is referred to Arveson's monograph 
\cite{Arv2003-1}. 
Another standard reference of $E_0$-semigroups is the conference proceedings \cite{Pr} with several valuable articles, 
including Arveson's survey \cite{Arv2003-2}, which appeared at the right time when the field sufficiently matured. 
\cite{Arv2003-2} can be read as an introduction to \cite{Arv2003-1}.

This article ends with a short note in Appendix on noncommutative Poisson boundaries, a notion introduced by the author for 
normal ucp maps acting on von Neumann algebras. 
At the occasion of a workshop taking place at the Fields Institute in 2007, Arveson briefly mentioned to the author that 
the noncommutative Poisson boundary of a normal ucp map is identified with the fixed point algebra of the minimal dilation of 
the given normal ucp map. 
This observation has potentially useful consequences, and we include them.  

\section{Around Arveson's work}

\subsection{Basic equivalence relations}
We first introduce two basic equivalence relations of $E_0$-semigroups, conjugacy and cocycle conjugacy. 
One of the main goals in the theory of $E_0$-semigroups is to classify them up to cocycle conjugacy. 

\begin{definition} Let $\alpha$ and $\beta$ be $E_0$-semigroups acting on $B(H)$ and $B(K)$ respectively. 
\begin{itemize} 
\item[(i)] We say that $\alpha$ and $\beta$ are conjugate if there exists a unitary $V$ from $H$ onto $K$ satisfying 
$\Ad V \circ \alpha_t=\beta_t\circ \Ad V$ for all $t\geq 0$, where $\Ad V(A)=VAV^*$. 
\item[(ii)] We say that a weakly continuous family of unitaries $U=\{U_t\}_{t\geq 0}$ in $B(H)$ is an $\alpha$-cocycle if 
they satisfy the 1-cocycle relation $U_{s+t}=U_s\alpha_t(U_t)$ for all $s,t\geq 0$. 
When $U$ is an $\alpha$-cocycle, we have a new $E_0$-semigroup defined by $\alpha^U=\{\Ad U_t\circ \alpha_t\}_{t\geq 0}$, 
which we call the perturbation of $\alpha$ by $U$. 
\item[(iii)] We say that $\alpha$ and $\beta$ are cocycle conjugate if a cocycle perturbation of $\alpha$ 
is conjugate to $\beta$. 
\end{itemize}
\end{definition}

To see the difference of the two equivalence relations above, we start with the trivial class, namely the class of  
$E_0$-semigroups $\alpha$ with $\alpha_t\in \Aut(B(H))$ for all $t\geq 0$. 
In this case, a classical result due to E. Wigner says that there exists a (possibly unbounded) self-adjoint operator $A$, 
uniquely determined up to additive constant, satisfying $\alpha_t=\Ad e^{itA}$. 
Therefore such $\alpha$ is always in the cocycle conjugacy class of $\id$, the semigroup consisting of only the identity map, 
while the classification of such $\alpha$ up to conjugacy is equivalent to the classification of $A$ up to additive constant 
and unitary equivalence. 
Therefore we see that the classification up to conjugacy is already complicated in the trivial case, and we cannot 
expect reasonable classification results for more general classes of $E_0$-semigroups. 

There is another reason why cocycle conjugacy is so natural. 
Let $\alpha$ be an $E_0$-semigroup acting on $B(H)$. 
The generator $\delta$ of $\alpha$ is defined by the limit
\begin{equation*}
\lim_{t\to+0}\frac{1}{t}(\alpha_t(A)-A),
\end{equation*}
in the strong operator topology, where the domain of $\delta$ is the set of $A\in B(H)$ for which the limit exists.  
Let $D\in B(H)$ be a self-adjoint operator, and let $\delta'(A)=\delta(A)+i[D,A]$.  
Then $\delta'$ generates an $E_0$-semigroup $e^{t\delta'}$ that is a cocycle perturbation of $\alpha$.  

\subsection{Product systems}
Before introducing a product system, we first discuss the Hilbert space consisting of the intertwining operators 
for a unital endomorphism $\rho$ of $B(H)$. 
Recall that $H$ is a separable infinite dimensional complex Hilbert space. 
The endomorphism $\rho$, regarded as a normal representation of $B(H)$ on $H$, 
is unitarily equivalent to a direct sum of copies of the identity representation of $B(H)$. 
This means that there exists an orthogonal decomposition $H=\oplus_{i\in I}H_i$ with 
$\rho(B(H))$-invariant closed subspaces $H_i\subset H$ such that the restriction of $\rho$ to $H_i$ is 
unitarily equivalent to the identity representation of $B(H)$. 
Thus there exist isometries $V_i\in B(H)$ whose range is $H_i$, satisfying 
$V_iA=\rho(A)V_i$ for all $A\in B(H)$, and in consequence 
\begin{equation}\label{endo}\rho(A)=\sum_{i\in I}V_iAV_i^*,\quad \forall A\in B(H),
\end{equation}
where the right-hand side converges in the strong operator topology.  

Let $\cH_\rho$ be the space of intertwining operators between $\id$ and $\rho$, that is 
\begin{equation*}
\cH_\rho=\{V\in B(H);\; VA=\rho(A)V,\;\forall A\in B(H)\}.
\end{equation*}
For $V,W\in \cH_\rho$, the product $W^*V$ is a scalar as it belongs to the center of $B(H)$. 
Equipped with inner product $\inpr{V}{W}_{\cH_\rho}1=W^*V$, the intertwiner space $\cH_\rho$ is a Hilbert space with an 
orthonormal basis $\{V_i\}_{i\in I}$. 
It is easy to see that the right-hand side of (\ref{endo}) does not depend on the choice of 
the orthonormal basis $\{V_i\}_{i\in I}$ of $\cH_\rho$. 
In fact for an arbitrary orthonormal system $\{V_i\}_{i\in I}$ of $\cH_\rho$, 
the equation (\ref{endo}) characterizes its completeness. 

Now we consider two unital endomorphisms $\rho,\sigma\in B(H)$. 
Then we have an inclusion relation $\cH_\rho\cdot \cH_\sigma\subset \cH_{\rho\circ \sigma}$, and for 
$V,V'\in \cH_\rho$ and $W,W'\in \cH_\sigma$,
\begin{align*}
\inpr{VW}{V'W'}_{\cH_{\rho\circ \sigma}} &={W'}^*{V'}^*VW=\inpr{V}{V'}_{\cH_\rho}{W'}^*W'
=\inpr{V}{V'}_{\cH_\rho}\inpr{W}{W'}_{\cH_\sigma} \\
 &=\inpr{V\otimes W}{V'\otimes W'}_{\cH_\rho\otimes \cH_\sigma}.
\end{align*}
Moreover if $\{V_i\}_{i\in I}$ and $\{W_j\}_{j\in J}$ are orthonormal bases of $\cH_\rho$ and $\cH_\sigma$ 
respectively, then $\{V_iW_j\}_{i\in I,j\in J}$ is an orthonormal basis of $\cH_{\rho\circ \sigma}$ as we have 
\begin{equation*}
\rho\circ \sigma(A)=\sum_{i\in I,j\in J}V_iW_jAW_j^*V_i^*.
\end{equation*}
This means that we can identify $\cH_{\rho\circ \sigma}$ with the Hilbert space tensor product 
$\cH_\rho\otimes \cH_\sigma$ under the identification of the product $VW$ in $B(H)$ and the simple tensor 
$V\otimes W$ in $\cH_\rho\otimes \cH_\sigma$. 

We get back to our original situation and consider an $E_0$-semigroup $\alpha$ acting on $B(H)$. 
Then we have a 1-parameter family of Hilbert spaces $\cE_\alpha(t):=\cH_{\alpha_t}$ for $t>0$, with identification 
$\cE_\alpha(s+t)=\cE_\alpha(s)\otimes \cE_\alpha(t)$, where the usual product in $B(H)$ corresponds to tensor product. 
Moreover the association $(0,\infty)\ni t\mapsto \cE_\alpha(t)$ should be measurable (or more strongly continuous) 
in an appropriate sense. 
Arveson \cite{Arv89-1} axiomatized this situation and introduced the notion of product systems. 

\begin{definition} A product system is a family of separable Hilbert spaces $p:E\rightarrow (0,\infty)$ over 
the half-line $(0,\infty)$, with fiber Hilbert spaces $E(t)=p^{-1}(t)$, endowed with a bilinear associative 
multiplication $E(s)\times E(t)\ni (x,y)\mapsto xy\in E(s+t)$ satisfying the following conditions:
\begin{itemize}
\item[(i)] $\inpr{xu}{yv}=\inpr{x}{y}\inpr{u}{v}$ for all $x,y\in E(s)$, $u,v\in E(t)$, 
\item[(ii)] The linear span of $E(s)\cdot E(t)$ is dense in $E(s+t)$, 
\item[(iii)] $E$ has the structure of a standard Borel space that is compatible with the projection 
$p:E\rightarrow (0,\infty)$, multiplication, the vector space operations, and the inner product.   
Moreover, there exist a separable infinite dimensional Hilbert space $H_0$ and a Borel isomorphism from $E$ onto $(0,\infty)\times H_0$ 
compatible with the projection $p$.
\end{itemize}
\end{definition}

\begin{remark}(1) 
The measurability condition (iii) is equivalent to the following condition: 
there exists a countable family of 
cross sections $\{\xi_n(t)\}_{n\in \N}$ of $p$ such that the linear span of $\{\xi_n(t)\}_{n\in \N}$ is dense 
in $E(t)$ for all $t$ and the functions $(0,\infty)\ni t\mapsto \inpr{\xi_m(t)}{\xi_n(t)}$ 
and $(0,\infty)^2\ni (s,t)\mapsto \inpr{\xi_m(s)\xi_n(t)}{\xi_l(s+t)}$ are Borel measurable for all $m,n,l$. 
See \cite[Lemma 7.39]{L2009} for the proof.\\
(2) V. Liebscher \cite[Corollary 7.16]{L2009} showed that for a given $E$ satisfying (i) and (ii), 
there exists at most one Borel structure satisfying (iii). 
This justifies the following definition of isomorphisms of product systems: 
an isomorphism $\theta$ from $E$ to $F$ is a family $\{\theta_t\}_{t>0}$ of unitaries $\theta_t:E(t)\rightarrow F(t)$ satisfying 
$\theta_s(x)\theta_t(y)=\theta_{s+t}(xy)$ for all $x\in E(s)$ and $y\in E(t)$. 
\end{remark}

In what follows, we always consider the Borel structure of $B(H)$ given by the weak operator topology. 
Then $B(H)$ is a standard Borel space with this Borel structure. 
When $\alpha$ is an $E_0$-semigroup, Arveson \cite{Arv89-1} showed that 
\begin{equation*}
\cE_\alpha=\{(t,T)\in (0,\infty)\times B(H);\; T\in \cE_\alpha(t)\}
\end{equation*} 
is a product system with $p(t,T)=t$. 
We will often identify $p^{-1}(t)$ with $\cE_\alpha(t)$ and $(t,T)$ with $T$. 

Cocycle conjugate $E_0$-semigroups give isomorphic product systems. 
Indeed, it is obvious that conjugate $E_0$-semigroups give rise to isomorphic product systems. 
If $U$ is an $\alpha$-cocycle and $\alpha^U$ is the cocycle perturbation of $\alpha$ by $U$, 
then the family of maps $\cE_\alpha(t)\ni T\mapsto U_tT\in \cE_{\alpha^U}(t)$ gives an isomorphism of 
$\cE_\alpha$ and $\cE_{\alpha^U}$. 

Arveson \cite{Arv89-1} showed that the converse is also true. 

\begin{theorem} Two $E_0$-semigroups $\alpha$ and $\beta$ are cocycle conjugate if and only 
the corresponding product systems $\cE_\alpha$ and $\cE_\beta$ are isomorphic. 
\end{theorem}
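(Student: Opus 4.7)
My plan is to dispatch the forward direction by the observation already noted in the paragraph preceding the theorem (spatial conjugacies and cocycle perturbations manifestly yield product system isomorphisms), and to concentrate effort on the converse. First, by conjugating $\beta$ via any unitary $K\to H$, I reduce to the case $H=K$ and assume both $\alpha$ and $\beta$ act on the same $B(H)$. The task then becomes: given a product system isomorphism $\theta=\{\theta_t\}_{t>0}\colon\cE_\alpha\to\cE_\beta$, build an $\alpha$-cocycle $U=\{U_t\}$ in $B(H)$ with $\beta=\alpha^U$.

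The construction is fibrewise. For each $t>0$, I pick an orthonormal basis $\{V_i\}_{i\in I}$ of $\cE_\alpha(t)$ and set
$$U_t:=\sum_{i\in I}\theta_t(V_i)\,V_i^*,$$
the sum converging strongly. Since $\{\theta_t(V_i)\}_{i\in I}$ is then an ONB of $\cE_\beta(t)$, one reads off at once that $U_t$ is independent of the chosen basis, that $U_tU_t^*=\beta_t(1)=1$ and $U_t^*U_t=\alpha_t(1)=1$, and that $U_t\alpha_t(A)U_t^*=\beta_t(A)$ for every $A\in B(H)$, using the expansions $\alpha_t(A)=\sum V_iAV_i^*$ and $\beta_t(A)=\sum\theta_t(V_i)A\theta_t(V_i)^*$. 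For the cocycle identity I pick ONBs $\{V_i\}$ of $\cE_\alpha(s)$ and $\{W_j\}$ of $\cE_\alpha(t)$, use that $\{V_iW_j\}$ is an ONB of $\cE_\alpha(s+t)$, and invoke the multiplicativity $\theta_{s+t}(xy)=\theta_s(x)\theta_t(y)$; both $U_{s+t}$ and $U_s\alpha_s(U_t)$ (the latter after inserting $\alpha_s(U_t)=\sum_k V_kU_tV_k^*$ and using $V_i^*V_k=\delta_{ik}$) collapse to the common expression $\sum_{i,j}\theta_s(V_i)\theta_t(W_j)W_j^*V_i^*$.

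The main obstacle I anticipate is establishing the measurability—equivalently, the weak continuity—of $t\mapsto U_t$, which is what makes $\{U_t\}$ an honest cocycle once I set $U_0=1$. The Borel structure axiom (iii), together with the automatic compatibility of $\theta$ with the unique Borel structures (Remark part (2)), is the essential input. My plan here is to fix a countable family of Borel cross-sections $\{\xi_n\}$ of $\cE_\alpha$ whose values span each fibre densely, observe that $t\mapsto\theta_t(\xi_n(t))$ is then Borel measurable into $B(H)$ with the weak operator Borel structure, and use this to write the matrix coefficients $t\mapsto\inpr{U_th}{k}$ as measurable functions for $h,k$ in a dense subset of $H$. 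A standard measurability-to-continuity lemma for unitary cocycles of an $E_0$-semigroup (present in Arveson's monograph) then upgrades measurability to weak continuity, and $\beta=\alpha^U$ exhibits $\beta$ as a cocycle perturbation of $\alpha$ as required.
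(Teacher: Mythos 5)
Your proposal is correct and follows essentially the same route as the paper's own sketch: the converse is proved by forming $U_t=\sum_i\theta_t(V_i)V_i^*$ from an orthonormal basis of $\cE_\alpha(t)$, checking basis-independence, unitarity, the cocycle identity via multiplicativity of $\theta$, and the intertwining $\Ad U_t\circ\alpha_t=\beta_t$ from the expansion (\ref{endo}), with measurability from axiom (iii) upgraded to continuity by the standard argument. The extra details you supply (reduction to a common $B(H)$, the explicit collapse of $U_s\alpha_s(U_t)$) are exactly the steps the paper leaves implicit.
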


To sketch the proof of the other implication, we assume that both $\alpha$ and $\beta$ act on $B(H)$ for simplicity, 
and assume that $\cE_\alpha$ and $\cE_\beta$ are isomorphic by $\theta_t:\cE_\alpha(t)\rightarrow \cE_\beta(t)$. 
We choose an arbitrary orthonormal basis $\{V_n\}_{n=1}^\infty$ of $\cE_\alpha(t)$ and set 
\begin{equation*}
U_t=\sum_{n=1}^\infty\theta_t(V_n)V_n^*,
\end{equation*}
which converges to a unitary operator in the strong operator topology. 
Then $U_t$ is independent of the choice of $\{V_n\}_{n=1}^\infty$, which enables us to show that 
$\{U_t\}_{t>0}$ satisfies the 1-cocycle relation. 
Moreover the condition (iii) implies that the map $(0,\infty)\ni t \mapsto U_t$ is Borel. 
Now it follows from a standard trick that $\{U_t\}_{t>0}$ is continuous, and it is an $\alpha$-cocycle. 
By construction, we have $\theta_t(V)=U_tV$ for all $V\in \cE_\alpha(t)$ and $\beta$ is the cocycle perturbation of $\alpha$ 
by $U$ thanks to (\ref{endo}). 

More strongly, Arveson \cite{Arv90-2} showed that the association $\alpha\mapsto \cE_\alpha$ induces a one-to-one correspondence 
between the set of cocycle conjugacy classes of $E_0$-semigroups and the set of isomorphism classes of product systems. 
The only issue now is surjectivity of $\alpha\mapsto \cE_\alpha$. 

\begin{theorem} For any product system $E$, there exists an $E_0$-semigroup $\alpha$ 
whose product system $\cE_\alpha$ is isomorphic to $E$. 
\end{theorem}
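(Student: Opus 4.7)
The plan is to reverse the construction $\alpha \mapsto \cE_\alpha$ by producing an $E_0$-semigroup from $E$ whose intertwiner fibers realise it. The central auxiliary notion is that of a \emph{representation} of $E$ on a separable infinite-dimensional Hilbert space $H$: a measurable family $\phi=\{\phi_t\}_{t>0}$ of linear maps $\phi_t:E(t)\to B(H)$ satisfying
\begin{equation*}
\phi_t(x)^*\phi_t(y)=\inpr{x}{y}_{E(t)}I_H,\qquad \phi_s(x)\phi_t(y)=\phi_{s+t}(xy),
\end{equation*}
and which is moreover \emph{essential} in the sense that $\sum_n \phi_t(e_n)\phi_t(e_n)^*=I_H$ in the strong operator topology for some (hence every) orthonormal basis $\{e_n\}_{n\in\N}$ of $E(t)$, and every $t>0$. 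Theorem reduces to producing such a $\phi$.

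Granting this, one mimics the discussion around (\ref{endo}): set $\alpha_t(A)=\sum_n \phi_t(e_n)A\phi_t(e_n)^*$ and $\alpha_0=\id$. The two relations make $\alpha_t$ a unital $*$-endomorphism independent of the basis chosen; if $\{e_n\}$ and $\{f_m\}$ are ONBs of $E(s)$ and $E(t)$ then axioms (i) and (ii) of a product system make $\{e_nf_m\}$ an ONB of $E(s+t)$, whence $\alpha_s\circ\alpha_t=\alpha_{s+t}$. Borel measurability of $\phi$ combined with the standard promotion of Borel measurability to continuity for one-parameter semigroups (already invoked in the proof of the preceding theorem) gives axiom (iii). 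By construction $\phi_t(E(t))\subset\cE_\alpha(t)$; conversely, if $V\in\cE_\alpha(t)$ then each $\phi_t(e_n)^*V$ commutes with every $A\in B(H)$, hence equals a scalar $c_n I$, and $V=\sum_n\phi_t(e_n)\phi_t(e_n)^*V=\sum_n c_n\phi_t(e_n)\in\phi_t(E(t))$. Compatibility with multiplication then identifies $E\cong\cE_\alpha$ as product systems.

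The serious step is the existence of an essential representation. When $E$ admits a measurable unit $u=\{u_t\}_{t>0}$ with $u_su_t=u_{s+t}$ and $\|u_t\|=1$, one can manufacture $\phi$ directly by the Fock-space construction of second quantisation, which handles the type I situation. For general $E$ (including unit-less product systems, as arise in type III) one proceeds more abstractly: form the algebraic $*$-algebra $\cA$ generated by symbols $[x]$ for $x\in E(t)$, $t>0$, modulo the natural linearity, product, and inner product relations, equip it with a $C^*$-seminorm obtained from the Cuntz-type identities $[x]^*[x]=\|x\|^2 1$, and construct a state on the resulting $C^*$-algebra whose GNS representation is both measurable and essential. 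The hard part is enforcing essentiality simultaneously at every scale $t>0$; one expects this to require an inductive construction along a sequence $t_k\downarrow 0$, using the associativity and axiom (ii) of the product system to ensure that essentiality propagates to arbitrarily small $t$ in the limit. Once such a state is produced, separability of $H$ follows from separability of the fibers of $E$, and the verifications of the second paragraph close the argument.
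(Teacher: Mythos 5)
Your first two paragraphs are correct and follow exactly the route the paper attributes to Arveson: a representation $\phi$ of $E$ gives endomorphisms $\alpha_t(A)=\sum_n\phi_t(e_n)A\phi_t(e_n)^*$ independent of the chosen orthonormal basis, unitality of $\alpha_t$ is precisely essentiality of $\phi$, and your argument that any $V\in\cE_\alpha(t)$ lies in $\phi_t(E(t))$ (via $\phi_t(e_n)^*V$ being scalar and the essentiality identity) is fine, as is the promotion of Borel measurability to the required continuity. But that is the routine half. The entire content of the theorem is the existence of an essential representation for an arbitrary, possibly unitless, product system, and at exactly that point your proposal stops proving and starts hoping: you introduce the universal $C^*$-algebra (this is Arveson's spectral $C^*$-algebra $C^*(E)$) and then assert that one should ``construct a state whose GNS representation is both measurable and essential,'' adding only that ``one expects'' an inductive construction along $t_k\downarrow 0$ to enforce essentiality. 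No such state is constructed, and nothing in the proposal explains why essentiality would survive any limiting procedure. Note that for a general representation the projections $\alpha_t(1)=\sum_n\phi_t(e_n)\phi_t(e_n)^*$ need not be $1$, the natural representation one can write down (the regular representation of $E$) is singular rather than essential, and producing even one essential state is precisely the long and involved core of Arveson's proof; the later simpler proofs (Arveson's 2006 argument, Skeide's, Liebscher's) do not follow your sketch either, but rest on a genuinely different device, e.g.\ building a left dilation from an arbitrary measurable section of unit vectors by an inductive limit over integer times and then handling measurability and continuity separately. So the decisive step of the theorem is missing.

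A secondary inaccuracy: a normalized unit yields only spatiality, not type I, so the Fock-space/second-quantisation shortcut you invoke covers the type I case (where $E$ is an exponential system) but not type II systems, which do possess units and would still fall under your unresolved general construction. This does not make the outline wrong, but it shows that essentially all non-type-I cases, spatial or not, depend on the step you have not supplied.
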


Arveson's original proof is really involved and it was the only proof for a while 
(see \cite{Arv2006}, \cite{L2009}, \cite{S2006-1}, \cite{S2006-2} for simpler proofs). 
In order to prove the theorem, Arveson developed the representation theory of product systems, 
which is interesting in its own right. 

A representation $\phi$ of a product system $E$ is a Borel map $\pi:E\rightarrow B(H)$ satisfying 
$\phi(x)\phi(y)=\phi(xy)$ for all $x\in E(s)$, $y\in E(t)$, and $\phi(v)^*\phi(u)1=\inpr{u}{v}$ 
for all $u,v\in E(t)$. 
Arveson constructed the regular representation of $E$ by analogy with the regular representation of a locally compact group.  
If $\phi$ is a representation of $E$, then the following formula with an orthonormal basis $\{e_n\}_{n=1}^\infty$ 
of $E(t)$, 
\begin{equation*}
\alpha_t(A)=\sum_{n=1}^\infty \phi(e_n)A\phi(e_n)^*,\quad A\in B(H),
\end{equation*}
does not depend on the particular choice of $\{e_n\}_{n=1}^\infty$, and 
$\alpha=\{\alpha_t\}_{t>0}$ is a semigroup of endomorphisms of $B(H)$ with appropriate continuity. 
In order to construct an $E_0$-semigroup whose product system is isomorphic to $E$, 
the only problem is that $\alpha_t$ may not be unital. 
The representations with $\alpha_t$ being unital are called essential representations. 
As in the case of locally compact groups, Arveson introduced the spectral $C^*$-algebra $C^*(E)$ of $E$ 
having the universal property with respect to the representations of $E$, and then he constructed a state of $C^*(E)$ 
giving rise to an essential representation of $E$ through the GNS construction. 

Arveson \cite{Arv89-3} showed that $C^*(E)$ is a nuclear $C^*$-algebra for any product system $E$. 
For the structure of $C^*(E)$, see \cite{H2004}, \cite{HZ2003}, \cite{Z2000-1}, \cite{Z2000-2}. 

For attempts to generalize product systems to those for Hilbert $W^*$ and $C^*$-modules, 
see, for example, \cite{Ale2004}, \cite{BBLS2004}, \cite{BSk2001}, \cite{MS2002}, \cite{MS2007}, \cite{S2008}. 

\subsection{CCR flows} The most fundamental examples of $E_0$-semigroups are CCR flows 
acting on $B(H)$, where $H$ is the symmetric Fock space over the test function space $L^2((0,\infty),K)$. 

For a complex Hilbert space $G$, which will be $L^2((0,\infty),K)$ for later use, 
we denote by $e^G$ the symmetric Fock space 
\begin{equation*}
e^G=\bigoplus_{n=0}^\infty G^n,
\end{equation*}
where $G^n$ is the $n$-fold symmetric tensor product of $G$, and $G^0$ is interpreted as the 1-dimensional 
space spanned by a unit vector $\Omega$, called the vacuum. 
The exponential vector $\exp(f)\in e^G$ for $f\in G$ is defined by  
\begin{equation*}\exp(f)=\sum_{n=0}^\infty \frac{1}{\sqrt{n!}}f^{\otimes n},
\end{equation*}
and we have $\inpr{\exp(f)}{\exp(g)}=e^{\inpr{f}{g}}.$
The set of exponential vectors form an independent and total subset of $G$.

When $G$ is decomposed into the direct sum of two closed subspaces $G_1$ and $G_2$, we have 
\begin{align*}
 \inpr{\exp(f_1\oplus f_2)}{\exp(g_1\oplus g_2)}&= e^{\inpr{f_1\oplus f_2}{g_1\oplus g_2}}
 =e^{\inpr{f_1}{g_1}+\inpr{f_2}{g_2}}\\
 &=\inpr{\exp(f_1)}{\exp(g_1)}\inpr{\exp(f_2)}{\exp(g_2)} \\
 &=\inpr{\exp(f_1)\otimes \exp(f_2)}{\exp(g_1)\otimes \exp(g_2)},
\end{align*}
for $f_1,g_1\in G_1$ and $f_2,g_2\in G_2$. 
This shows that the map 
\begin{equation*}
\exp(f_1\oplus f_2)\mapsto \exp(f_1)\otimes \exp(f_2)
\end{equation*} 
extends to a unitary from $e^{G_1\oplus G_2}$ onto $e^{G_1}\otimes e^{G_2}$. 
Therefore forming the symmetric Fock space is a functor from the category of Hilbert spaces into itself 
transforming direct sums into tensor products. 
In what follows we always identify $e^{G_1\oplus G_2}$ with $e^{G_1}\otimes e^{G_2}$. 

We denote by $W(f)\in B(e^G)$ the Weyl operator for $f\in G$, which is the unitary operator defined by 
\begin{equation*}
W(f)\exp(g)=e^{-\frac{1}{2}\|f\|^2-\inpr{g}{f}} \exp(g+f).
\end{equation*}
The Weyl operators satisfy the canonical commutation relation in the Weyl form 
\begin{equation*}
W(f)W(g)=e^{i\Im\inpr{f}{g}}W(f+g),
\end{equation*}
and their linear span is a dense $*$-subalgebra of $B(e^G)$ in the weak operator topology. 

Now we specify the test function space $G$ to be the set of square integrable functions 
$L^2((0,\infty),K)$ on the half-line with values in a complex Hilbert space $K$, called the 
multiplicity space. 
We denote by $S=\{S_t\}_{t\geq 0}$ the (forward) shift semigroup acting on $L^2((0,\infty),K)$: 
\begin{equation*}
S_tf(x)=\left\{
\begin{array}{ll}
0 , &\mathrm{for}\quad 0<x<t  \\
f(x-t) , &\mathrm{for}\quad t\leq x
\end{array}
\right.
.
\end{equation*}

\begin{definition} There exists a unique $E_0$-semigroup $\alpha^K$ acting on $B(H)$ with 
$H=e^{L^2((0,\infty),K)})$ satisfying  
\begin{equation*}
\alpha^K_t(W(f))=W(S_tf),\quad \forall f \in L^2((0,\infty),K).
\end{equation*}
We call $\alpha^K$ the CCR flow of rank $\dim K$. 
\end{definition}
 
\begin{remark} The CAR flows are defined in the same way except for replacing the symmetric Fock space 
with the antisymmetric Fock space. 
Powers-Robinson \cite{PR89} showed that the CCR flow and CAR flow of the same rank are conjugate. 
\end{remark}

Arveson \cite{Arv89-1} identified the product systems corresponding to the CCR flows.  
For $0\leq a<b\leq \infty$, we regard $L^2((a,b),K)$ as a closed subspace of $L^2((0,\infty),K)$ in a natural way, 
and $e^{L^2((a,b),K)}$ as a closed subspace of $H=e^{L^2((0,\infty),K)}$ generated by 
$\{\exp(f)\}_{f\in L^2((a,b),K)}$. 
We denote by $\Gamma(S_t)$ the isometry in $B(H)$ determined by $\Gamma(S_t)\exp(f)=\exp(S_tf)$ for all 
$f\in L^2((0,\infty),K)$. 
For $t>0$, we set $E^K(t)=e^{L^2((0,t),K)}\subset H$. 
Then 
\begin{equation*} 
E^K=\{(t,\xi)\in (0,\infty)\times H;\; \xi\in E^K(t)\}
\end{equation*}
is a product system with $p(t,\xi)=t$ and multiplication 
\begin{equation*}(s,\xi)\cdot(t,\eta)=(s+t,\xi\otimes \Gamma(S_s)\eta),
\end{equation*}
where we use the following identification 
\begin{equation*}
e^{L^2((0,s+t),K)}=e^{L^2((0,s),K)\oplus S_sL^2((0,t),K)}=e^{L^2((0,s),K)}\otimes \Gamma(S_s)e^{L^2((0,t),K)}.
\end{equation*}
We call $E^K$ the exponential product system, which is isomorphic to the product system $\cE_{\alpha^K}$ associated with 
the CCR flow $\alpha^K$, via the representation $\phi:E^K\rightarrow B(H)$ given by 
$\phi((t,\xi))\eta=\xi\otimes \Gamma(S_t)\eta$ for $\eta\in H$.  

\begin{remark}\label{white noise} The exponential product systems can be interpreted as product systems associated 
with white noise.  
For simplicity, we consider the case $K=\C$. 
Let $\{B_t\}_{t\geq 0}$ be the standard Brownian motion defined on the probability space $(\Omega,\cF,W)$. 
For $0\leq s<t\leq \infty$, we denote by $\cF_{s,t}$ the $\sigma$-algebra generated by the increments 
$B_v-B_u$ for all $s\leq u<v\leq t$. 
We may assume $\cF=\cF_{0,\infty}$. 
Then the well-known Wiener-Ito chaos decomposition (see \cite[Chapter IV]{M93}) says that the nested system of subspaces 
$\{L^2(\cF_{s,t})\}_{0\leq s<t\leq \infty}$ 
of $L^2(\Omega,\cF,W)$ is identified with that of subspaces $\{e^{L^2(s,t)}\}_{0\leq s<t\leq \infty}$ of $e^{L^2(0,\infty)}$, 
and the identification goes along with 
the tensor product factorizations 
$L^2(\cF_{r,t})=L^2(\cF_{r,s})\otimes L^2(\cF_{s,t})$ and $e^{L^2(r,t)}=e^{L^2(r,s)}\otimes e^{L^2(s,t)}$ for 
$r<s<t$. 
Moreover, the time shift of the Brownian motion induces an isometry from $L^2(\cF_{0,\infty})$ onto $L^2(\cF_{t,\infty})$. 
Therefore we can completely describe the exponential product system in terms of so called white noise, which consists of 
$(\Omega,\cF,W)$, $\{\cF_{s,t}\}_{0\leq s<t\leq \infty}$, and the time shift (strictly speaking, 
white noise is the two-sided version of it). 
White noise is only a special example of Tsirelson's notion of noises, and this interpretation opened up 
Tsirelson's probabilistic approach to $E_0$-semigroups (see Section 3). 
\end{remark}

\subsection{Index}
It is natural to ask whether one can distinguish the CCR flows with different ranks up to cocycle conjugacy. 
To answer the question in the positive, we need an isomorphism invariant for product systems, and the first 
such invariant was provided by Arveson \cite{Arv89-1}. 

\begin{definition} Let $E$ be a product system. 
A unit of $E$ is a non-zero measurable section $(0,\infty)\ni t\mapsto u(t)\in E(t)$ that is multiplicative, 
\begin{equation*}
u(s+t)=u(s)u(t),\quad \forall s,t>0.
\end{equation*}
We denote by $\cU_E$ the set of units of $E$.  
\end{definition} 

For the product system $\cE_\alpha$ associated with an $E_0$-semigroup $\alpha$, a unit is nothing but 
a continuous semigroup $V=\{V_t\}_{t\geq 0}$ of isometries, up to normalization, satisfying the intertwining property 
$V_tA=\alpha_t(A)V_t$. 
We say that $\alpha$ is spatial if $\cU_{\cE_\alpha}$ is not empty.

The multiplicative property of units implies that for $u,v\in \cU_E$, there exists a unique complex number 
$c_E(u,v)$ satisfying $\inpr{u(t)}{v(t)}=e^{tc_E(u,v)}$ for all $t>0$. 
For each fixed $t>0$, the function $e^{tc_E(u,v)}$ on $\cU_E\times \cU_E$ is positive definite by definition, and 
the Schoenberg theorem shows that $c_E$ is conditionally positive definite. 
Let $\C_0\cU_E$ be the set of functions $\xi:\cU_E\rightarrow \C$ with finite support and $\sum_{u\in \cU_E}\xi(u)=0$. 
Then $c_E$ being conditionally positive definite means 
\begin{equation*}
\sum_{u,v}c_E(u,v)\xi(u)\overline{\xi(v)}\geq 0,\quad \forall \xi\in \C_0\cU_E.
\end{equation*}
Therefore $c_E$ gives a positive semi-definite inner product of $\C_0\cU_E$, and we get a separable Hilbert space, 
denoted by $H(\cU_E,c_E)$, by the usual procedure. 
The dimension of this Hilbert space is an isomorphism invariant of the product system $E$. 
Note that $H(\cU_E,c_E)=\{0\}$ is possible, not as in the case of positive definite functions.  
Intuitively $\dim H(\cU_E,c_E)$ is $``\dim\cU_E"-1$.  

\begin{definition} The index of a product system $E$ with $\cU_E\neq \emptyset$ is 
\begin{equation*}\ind(E)=\dim H(\cU_E,c_E).\end{equation*} 
The index of a spatial $E_0$-semigroup $\alpha$ is $\ind(\alpha)=\ind(\cE_\alpha)$. 
\end{definition}

\begin{remark} The first attempt to introduce a numerical invariant for $E_0$-semigroups was made by Powers \cite{P88}. 
To define his index, he constructed what is now called the boundary representation by an infinitesimal argument. 
However, his definition a priori depends on the choice of a normalized unit. 
Powers-Price \cite{PP90} clarified the precise relationship between Arveson's index and the boundary representation,   
and Alevras \cite{Ale95} showed that the boundary representation does not depend on the choice of a normalized unit. 
\end{remark}

Arveson \cite{Arv89-2} showed that the addition formula $\ind(\alpha\otimes \beta)=\ind(\alpha)+\ind(\beta)$ holds 
for spatial $E_0$-semigroups $\alpha$ and $\beta$.  
When one of them is non-spatial, so is $\alpha\otimes \beta$. 

For a product system with $\cU_E\neq \emptyset$, we fix a unit $e\in \cU_E$ with normalization 
$\inpr{e(t)}{e(t)}=1$, and set 
\begin{equation*}\cU_E^e=\{u\in \cU_E;\; \inpr{u(t)}{e(t)}=1,\;\forall t>0\}.\end{equation*}
Let $L_u=\delta_u-\delta_e\in \C_0\cU_E$ for $u\in \cU_E^e$. 
Then one can show that $\inpr{L_u}{L_v}=c_E(u,v)$ and $H(\cU_E,c_E)$ is spanned by $\{L_u\}_{u\in \cU_E^e}$. 

For the exponential product systems $E^K$, we set 
\begin{equation*}
u^{(a,\zeta)}(t)=e^{at}\exp(1_{(0,t)}\otimes \zeta)\in E^K(t),
\end{equation*}
for $a\in \C$ and $\zeta\in K$, where $1_{(0,t)}$ is the indicator function of the interval $(0,t)$. 
Then $u^{(a,\zeta)}$ is a unit and 
\begin{equation*}c_{E^K}(u^{(a_1,\zeta_1)},u^{(a_2,\zeta_2)})
=a_1+\overline{a_2}+\inpr{\zeta_1}{\zeta_2}.\end{equation*} 
Arveson \cite{Arv89-1} showed that there are no other units. 
We can choose $e=u^{(0,0)}$, the vacuum vector, and in this case 
$\cU_{E^K}^e=\{u^{(0,\zeta)};\; \zeta\in K\}$.  
Now the Hilbert space $H(\cU_{E^K},c_{E^K})$ is identified with the multiplicity space $K$. 

\begin{theorem} Every unit of the exponential product system $E^K$ is of the form $u^{(a,\zeta)}$ for 
$a\in \C$ and $\zeta\in K$. The correspondence 
$K\ni \zeta\mapsto \delta_{u^{(0,\zeta)}}-\delta_{u^{(0,0)}}\in H(\cU_{E^K},c_{E^K})$ 
gives a unitary operator from $K$ onto $H(\cU_{E^K},c_{E^K})$. 
In particular, the index $\ind(\alpha^K)$ of the CCR flow $\alpha^K$ is $\dim K$. 
\end{theorem}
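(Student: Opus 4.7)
The plan is to split the proof into two parts: the classification of all units (the substantial part) and the subsequent computation of $c_{E^K}$ and the index Hilbert space. To analyze a unit $u$, I expand it in chaos as $u(t)=\sum_{n=0}^\infty u_n(t)$ with $u_n(t)$ in the $n$-fold symmetric tensor power of $L^2((0,t),K)$. The multiplicativity of $u$, combined with the identification $e^{L^2((0,s+t),K)}=e^{L^2((0,s),K)}\otimes \Gamma(S_s)e^{L^2((0,t),K)}$, forces the vacuum component $u_0(t)\in\C$ to satisfy $u_0(s+t)=u_0(s)u_0(t)$; measurability then gives $u_0(t)=e^{at}$ for some $a\in\C$ (the degenerate case $u_0\equiv 0$ propagates by induction to vanishing of every chaos component, contradicting $u\neq 0$). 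Replacing $u$ by $e^{-at}u$, I reduce to $u_0=1$; the $1$-particle component of multiplicativity then reads $u_1(s+t)=u_1(s)+\Gamma(S_s)u_1(t)$ (with $u_1(s)$ extended by zero on $(s,s+t)$), and a translation-invariance argument forces $u_1(t)=1_{(0,t)}\otimes \zeta$ for some $\zeta\in K$.

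To match the higher chaos components with those of $\exp(1_{(0,t)}\otimes \zeta)$, I set $X(t):=u(t)-\exp(1_{(0,t)}\otimes \zeta)$ and prove inductively that $X(t)\in \bigoplus_{n\geq N}L^2((0,t),K)^{(n)}$ for every $N\geq 2$, whence $X=0$. The base case $N=2$ follows from the previous step. For the inductive step, expanding multiplicativity gives
\[X(s+t)=\exp(f_s)\otimes \Gamma(S_s)X(t)+X(s)\otimes \Gamma(S_s)\exp(f_t)+X(s)\otimes \Gamma(S_s)X(t),\]
with $f_t=1_{(0,t)}\otimes \zeta$. The cross term sits in chaos $\geq 2N$ and contributes nothing at level $N$; the first two terms imply that $X_N(s+t)$, viewed as a symmetric $K^{\otimes N}$-valued function on $(0,s+t)^N$, is supported on $(0,s)^N\cup(s,s+t)^N$. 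Since the split point $s\in(0,s+t)$ is arbitrary (and a countable dense choice of splits suffices), the support of $X_N(s+t)$ must lie on the diagonal $\{(x,\ldots,x)\}$, which is Lebesgue-null; hence $X_N=0$. This support-vanishing step is the technical heart of the argument.

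For the remaining assertions, a direct calculation using $\inpr{\exp(f)}{\exp(g)}=e^{\inpr{f}{g}}$ gives $\inpr{u^{(a_1,\zeta_1)}(t)}{u^{(a_2,\zeta_2)}(t)}=e^{t(a_1+\overline{a_2}+\inpr{\zeta_1}{\zeta_2})}$, whence $c_{E^K}(u^{(a_1,\zeta_1)},u^{(a_2,\zeta_2)})=a_1+\overline{a_2}+\inpr{\zeta_1}{\zeta_2}$. Taking $e=u^{(0,0)}$, the normalization condition $\inpr{u^{(a,\zeta)}(t)}{e(t)}=1$ reduces to $e^{at}=1$, forcing $a=0$, so $\cU_{E^K}^e=\{u^{(0,\zeta)}:\zeta\in K\}$. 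A direct expansion then yields $\inpr{L_{u^{(0,\zeta_1)}}}{L_{u^{(0,\zeta_2)}}}=\inpr{\zeta_1}{\zeta_2}$, so $\zeta\mapsto L_{u^{(0,\zeta)}}$ is an isometry from $K$ into $H(\cU_{E^K},c_{E^K})$, surjective because $\{L_u\}_{u\in \cU_{E^K}^e}$ already spans $H(\cU_{E^K},c_{E^K})$. The index formula $\ind(\alpha^K)=\dim K$ follows immediately.
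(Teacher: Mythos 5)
Your proposal is correct, but it is worth pointing out that the paper itself does not actually prove the hard part of this theorem: it only records the covariance computation $c_{E^K}(u^{(a_1,\zeta_1)},u^{(a_2,\zeta_2)})=a_1+\overline{a_2}+\inpr{\zeta_1}{\zeta_2}$, the choice $e=u^{(0,0)}$, and the resulting identification of $H(\cU_{E^K},c_{E^K})$ with $K$, while the statement that there are no units other than $u^{(a,\zeta)}$ is cited to Arveson \cite{Arv89-1}. Your chaos-decomposition argument supplies that missing classification in a self-contained way, working entirely inside the exponential product system rather than through the intertwining-semigroup analysis of the CCR flow in the cited reference, and the key steps check out: the vacuum component satisfies $u_0(s+t)=u_0(s)u_0(t)$ and is measurable, so it is either $e^{at}$ or identically zero, and in the degenerate case the level-by-level induction does force $u=0$; the one-particle component is an additive cocycle for the shift, is determined by a single locally square-integrable $K$-valued function which is a.e. translation invariant, hence constant, giving $1_{(0,t)}\otimes\zeta$; and in the inductive step the cross term $X(s)\otimes\Gamma(S_s)X(t)$ lives in chaos levels $\geq 2N>N$, so the level-$N$ component of $X(r)$ is supported in $(0,s)^N\cup(s,r)^N$ for every split point $s$, and intersecting over a countable dense set of splits confines the support to the diagonal, which is Lebesgue-null exactly because $N\geq 2$ — which is where your induction starts. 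The second half coincides with the paper's sketch; the only points to make explicit are that the map $\zeta\mapsto L_{u^{(0,\zeta)}}$ is automatically linear once it preserves inner products (expand $\|L_{u^{(0,\zeta_1+\zeta_2)}}-L_{u^{(0,\zeta_1)}}-L_{u^{(0,\zeta_2)}}\|^2$ and use $\inpr{L_{u^{(0,\zeta)}}}{L_{u^{(0,\eta)}}}=\inpr{\zeta}{\eta}$), and that surjectivity rests on the previously stated fact that $\{L_u\}_{u\in\cU_{E^K}^e}$ spans $H(\cU_{E^K},c_{E^K})$, which you invoke correctly; the index statement then follows from the isomorphism $E^K\cong\cE_{\alpha^K}$ recorded in the paper.
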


\subsection{Type classification and the classification of type I product systems} 

Product systems are classified according to how abundant the set $\cU_E$ is. 

\begin{definition} Let $E$ be a product system. 
\begin{itemize}
\item [(i)] We say that $E$ is of type I if the linear span of 
\begin{equation*}\{u_1(t_1)u_2(t_2)\cdots u_n(t_m)\in E(t);\; u_1,u_2,\ldots, u_m\in \cU_E,\;t_1+t_2+\cdots t_m=t\}\end{equation*}
is dense in $E(t)$ for all (or equivalently, some) $t>0$. 
Type I product systems are further divided into type I$_n$, $n=1,2,\ldots,\infty$, according to 
the value $n=\ind(E)$ of the index. 
\item [(ii)] We say that $E$ is of type II if $\cU_E\neq \emptyset$ and the condition in (i) is not satisfied. 
Type II product systems are further divided into type II$_n$, $n=0,1,\ldots,\infty$, according to 
the value $n=\ind(E)$ of the index. 
\item [(iii)] We say that $E$ is of type III if $\cU_E=\emptyset$. 
\end{itemize}
We use the same terms for $E_0$-semigroups $\alpha$ through the product systems $\cE_\alpha$. 
\end{definition}

\begin{remark} Since we assume that $E(t)$ is infinite dimensional for each $t>0$, type I$_0$ 
never occurs while type II$_0$ product systems actually occur and they form an important subclass of 
type II product systems. 
One could define trivial $E_0$-semigroups (i.e. $\alpha_t\in \Aut(B(H))$) to be of type I$_0$.  
\end{remark}

Arveson \cite{Arv89-1} completely classified type I $E_0$-semigroups. 

\begin{theorem}\label{typeI} Let $E$ be a type I product system, and let $K=H(\cU_E,c_E)$. 
Then $E$ is isomorphic to the exponential product system $E^K$. 
In particular, there exists exactly one cocycle conjugacy class of $E_0$-semigroups of type I$_n$ 
for each $n=1,2,\ldots, \infty$. 
\end{theorem}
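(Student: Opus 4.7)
The plan is to construct an explicit product-system isomorphism $\theta : E \to E^K$ by sending normalized units of $E$ to the corresponding exponential units of $E^K$, and extending multiplicatively and by density using the type I hypothesis. Fix a normalized unit $e \in \cU_E$ and let $\cU_E^e = \{u \in \cU_E : \inpr{u(t)}{e(t)} = 1,\, \forall t\}$. For $u \in \cU_E^e$, set $\theta_t(u(t)) := \exp(1_{(0,t)} \otimes L_u) \in E^K(t)$ and extend linearly. The identity $\inpr{u(t)}{v(t)} = e^{tc_E(u,v)} = e^{t\inpr{L_u}{L_v}} = \inpr{\exp(1_{(0,t)} \otimes L_u)}{\exp(1_{(0,t)} \otimes L_v)}$ shows $\theta_t$ is isometric on the span of $\{u(t) : u \in \cU_E^e\}$. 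Extend to products: for $u_i \in \cU_E^e$ and $\sum t_i = t$, send $u_1(t_1)\cdots u_n(t_n)$ to the exponential vector of the step function $\sum_i 1_{I_i} \otimes L_{u_i}$ on $(0,t)$, where the intervals $I_i$ come from the subdivision. Matching of inner products factors through common refinements of subdivisions using the tensor-product decompositions on both sides, and multiplicativity is built into the construction. Since $E$ is type I, such products span densely in $E(t)$, so $\theta_t$ extends to a multiplicative isometry $E(t) \hookrightarrow E^K(t)$.

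The main obstacle is surjectivity. This amounts to showing that the logarithm map $\cU_E^e \to K$, $u \mapsto L_u$, is bijective; injectivity is immediate, since $L_u = L_v$ together with the normalization $\inpr{u(t)}{e(t)} = \inpr{v(t)}{e(t)} = 1$ forces $\|u(t) - v(t)\|^2 = 0$. One must then construct, for each $\xi \in K$, a normalized unit $u_\xi$ with $L_{u_\xi} = \xi$. The strategy is to approximate $\xi$ by finite linear combinations $\sum a_i L_{u_i}$ using the definition of $K$, and then build $u_\xi$ as a Trotter-type limit of small-time products $w_1(t/N)w_2(t/N)\cdots w_N(t/N)$ whose infinitesimal data has tangent $\xi$, verifying convergence in $E(t)$. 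Equivalently, one endows $\cU_E^e$ directly with a Hilbert space structure compatible with $c_E$, identifying it with $K$. This step is the heart of the argument and requires genuinely new structure beyond the formal inner-product calculation.

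Once bijectivity is established, the image of $\theta_t$ contains $\exp(1_{(0,t)} \otimes \xi)$ for all $\xi \in K$ and hence the exponentials of arbitrary step functions $(0,t) \to K$, which span $e^{L^2((0,t),K)}$ densely; thus $\theta_t$ is unitary. Multiplicativity is built into the construction, and measurability is automatic by Liebscher's uniqueness result mentioned in Remark~2.4. Combining the resulting isomorphism $E \cong E^K$ with the earlier correspondence between cocycle conjugacy classes of $E_0$-semigroups and isomorphism classes of product systems, and observing that $\dim K = \ind(E) = n$ determines $E^K$ up to isomorphism, yields exactly one cocycle conjugacy class of type I$_n$ $E_0$-semigroups for each $n = 1, 2, \ldots, \infty$, namely the class of the CCR flow $\alpha^K$.
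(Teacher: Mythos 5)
Your construction of the candidate isomorphism is exactly the one sketched in the paper: fix a normalized unit $e$, send $u_1(t_1)u_2(t_2)\cdots u_m(t_m)$ with $u_i\in\cU_E^e$ to $\exp\bigl(\sum_i 1_{(s_{i-1},s_i)}\otimes L_{u_i}\bigr)$, check inner products over common refinements, extend isometrically, and use the type I hypothesis for density of the domain; invoking Liebscher's uniqueness of the Borel structure to avoid checking measurability is also consistent with Remark 2.4. Up to that point your proposal and the paper's sketch coincide, and the deduction of the statement about cocycle conjugacy classes from the isomorphism $E\cong E^K$ is correct.

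The genuine gap is exactly where you flag it, and flagging it is not the same as closing it. Surjectivity of $\theta_t$ is the substance of the theorem, and you reduce it to the claim that every $\xi\in K=H(\cU_E,c_E)$ equals $L_{u_\xi}$ for some unit $u_\xi$, to be produced as a ``Trotter-type limit'' of products of units over fine partitions; but you neither construct these limits nor verify convergence and multiplicativity, and you yourself concede that this step ``requires genuinely new structure beyond the formal inner-product calculation.'' Note also that the fact you do get for free --- that the linear span of $\{L_u:u\in\cU_E^e\}$ is dense in $K$, by the very construction of $H(\cU_E,c_E)$ --- does not rescue the argument: the range of $\theta_t$ is the closed span of exponentials of step functions with values in the \emph{set} $\{L_u\}$, and since $f\mapsto\exp(f)$ is not linear, density of the span of the values does not obviously give totality of these exponential vectors (weak limits of rapidly alternating step functions only produce convex combinations of values, not linear ones). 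So either the surjectivity of $u\mapsto L_u$ onto $K$ or an equivalent totality statement must actually be proved; this is precisely the part of Arveson's argument that the survey does not reproduce but attributes to \cite{Arv89-1}, and as written your proposal stops short of a proof at that same point.
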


To prove that two given $E_0$-semigroups are cocycle conjugate, in general it is not so easy to construct a cocycle 
explicitly, and Arveson's proof really makes use of the advantage of introducing the abstract notion of product systems. 

We sketch how to construct the isomorphism in Theorem \ref{typeI}. 
For a given type I product system $E$, we fix a normalized unit $e\in \cU_E$, and define $\cU_E^e$ and 
$L_u$ for $u\in \cU_E^e$ as before. 
Let $u_1,u_2,\ldots, u_m\in \cU_E^e$, and let $t_1,t_2,\ldots,t_m>0$ with summation $t$. 
We set $s_0=0$, and $s_i=t_1+t_2+\cdots+t_i$. 
Then the isomorphism in Theorem \ref{typeI} takes $u_1(t_1)u_2(t_2)\cdots u_m(t_m)\in E(t)$ to 
\begin{equation*}\exp(\sum_{i=1}^m1_{(s_{i-1},s_i)}\otimes L_{u_i})\in e^{L^2((0,t),K)}.\end{equation*}

Theorem \ref{typeI} shows that even if we try more general L{\'e}vy processes instead of the Brownian motion 
in Remark \ref{white noise}, we still get exponential product systems.  
To obtain non-type I product systems, we need truly non-classical noises. 

Later, Arveson \cite{Arv97-3} strengthened Theorem \ref{typeI}. 
Let $E$ be a product system. 
We say that a non-zero vector $x\in E(t)$ is decomposable if for every $0<s<t$, there exist $y\in E(s)$ and 
$z\in E(t-s)$ satisfying $x=yz$.  
We denote by $D(t)$ the set of decomposable vectors in $E(t)$. 
We say that $E$ is decomposable if $D(t)$ is a total subset of $E(t)$ for all $t>0$. 
A typical example of a decomposable vector is the product of units $u_1(t_1)u_2(t_2)\cdots u_m(t_m)$ as above, 
and so type I product systems are decomposable. 
Arveson \cite{Arv97-3} showed that every decomposable product system is of type I, 
whose proof is much more involved than that of Theorem \ref{typeI}.

\subsection{Gauge groups} 
It is often true that significant information of a mathematical object is carried by the 
structure of its automorphism group. 
For an $E_0$-semigroup $\alpha$, the automorphism group $\Aut(\cE_\alpha)$ of the associated product system 
$\cE_\alpha$ is isomorphic to the gauge group. 

\begin{definition} A gauge cocycle of an $E_0$-semigroup $\alpha$ is an $\alpha$-cocycle 
$U$ satisfying $\alpha^U=\alpha$, that is $U_t\in \alpha_t(B(H))'$ for all $t>0$. 
We denote by $G(\alpha)$ the group of gauge cocycles, and call it the gauge group of $\alpha$. 
\end{definition}
Since $U(H)$ is a Polish group in the weak operator topology, so is the gauge group $G(\alpha)$  
in the topology of uniform convergence on compact subsets. 

For a type I product system, automorphisms are determined by their actions on units. 
Arveson \cite{Arv89-1} completely determined the structure of the gauge groups of the CCR flows. 

\begin{theorem}\label{gauge} For the CCR flow $\alpha^K$, the gauge group $G(\alpha^K)$ is isomorphic to 
the central extension of the semi-direct product group $K\rtimes U(K)$ by $\R$. 
More precisely $G(\alpha^K)=\R\times K\times U(K)$ as a topological space, and the group operation is given by 
\begin{equation*}(\lambda,\xi,U)(\mu,\eta,V)=(\lambda+\mu+\omega(\xi,U\eta),\xi+U\eta,UV),
\end{equation*}
where $\omega$ is the symplectic form $\omega(\xi,\eta)=\Im\inpr{\xi}{\eta}$, $\xi,\eta\in K$. 
\end{theorem}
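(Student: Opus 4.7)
The plan is to exploit the identification $G(\alpha^K)\cong \Aut(\cE_{\alpha^K})$ together with the isomorphism $\cE_{\alpha^K}\cong E^K$ established above, reducing the problem to computing $\Aut(E^K)$. Since $E^K$ is of type I, automorphisms are controlled by their action on units, and the units of $E^K$ have already been completely classified.

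Any $\theta=\{\theta_t\}\in\Aut(E^K)$ sends units to units: for $u\in \cU_{E^K}$, the section $t\mapsto \theta_t(u(t))$ is multiplicative since $\theta_s(u(s))\theta_t(u(t))=\theta_{s+t}(u(s+t))$, hence equals some $u^{(a',\zeta')}$. This produces $t$-independent functions $f:\C\times K\to \C$ and $g:\C\times K\to K$ with $\theta_t(u^{(a,\zeta)}(t))=u^{(f(a,\zeta),g(a,\zeta))}(t)$. Unitarity of $\theta_t$ forces
\begin{equation*}
f(a_1,\zeta_1)+\overline{f(a_2,\zeta_2)}+\inpr{g(a_1,\zeta_1)}{g(a_2,\zeta_2)}=a_1+\overline{a_2}+\inpr{\zeta_1}{\zeta_2},
\end{equation*}
and specializing $(a_i,\zeta_i)=(0,0)$ together with invertibility of $\theta$ (which forces surjectivity of $\zeta\mapsto g(\zeta)-g(0)$) yields $g(a,\zeta)=U\zeta+\xi$ with $U\in U(K)$, $\xi\in K$, independent of $a$, and then $f(a,\zeta)=a+i\lambda-\tfrac{1}{2}\|\xi\|^2-\inpr{U\zeta}{\xi}$ for some $\lambda\in\R$. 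Hence $\Aut(E^K)$ is in bijection with $\R\times K\times U(K)$ as a set.

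The group law follows from a direct computation: composing automorphisms with parameters $(\lambda_i,\xi_i,U_i)$ and tracking how $u^{(a,\zeta)}$ transforms produces the composite parameters $(\lambda_1+\lambda_2+\Im\inpr{\xi_1}{U_1\xi_2},\,\xi_1+U_1\xi_2,\,U_1U_2)$, exactly the claimed central extension with $\omega(\xi,\eta)=\Im\inpr{\xi}{\eta}$. The imaginary part in the first coordinate arises from the identity $-\inpr{U_1\xi_2}{\xi_1}+\Re\inpr{U_1\xi_2}{\xi_1}=-i\Im\inpr{U_1\xi_2}{\xi_1}$, and it is precisely this surviving term that encodes the nontriviality of the extension.

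To show every $(\lambda,\xi,U)$ actually arises from a gauge cocycle, I would use the factorization $H=e^{L^2((0,t),K)}\otimes e^{L^2((t,\infty),K)}$ and define
\begin{equation*}
U_t=\bigl(e^{i\lambda t}\,W(1_{(0,t)}\otimes\xi)\,\Gamma(\id_{L^2(0,t)}\otimes U)\bigr)\otimes \id.
\end{equation*}
A direct calculation on exponential vectors confirms that $U_t$ induces the prescribed action on units, while the $\alpha^K$-cocycle relation $U_{s+t}=U_s\,\alpha^K_s(U_t)$ follows from $W(f_1)W(f_2)=W(f_1+f_2)$ for $f_1,f_2$ of disjoint support (killing the CCR phase) together with $\Gamma(V_1)\otimes\Gamma(V_2)=\Gamma(V_1\oplus V_2)$. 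Weak continuity in $t$ is inherited from the continuity of the constituent pieces, and the identification of Polish topologies on $G(\alpha^K)$ and $\R\times K\times U(K)$ is routine. The main obstacle I expect is in the derivation of the form of $g$: ruling out exotic inner-product preserving maps and extracting a genuine unitary relies on using $\theta^{-1}$ carefully, and the explicit cocycle construction then requires attention to the non-commutativity of $W$ and $\Gamma$ within a single interval, even though this non-commutativity disappears across disjoint intervals used in the cocycle identity.
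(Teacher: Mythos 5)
Your proposal is correct and follows essentially the route the paper indicates (and Arveson's original argument): since $E^K$ is of type I, an automorphism is determined by its action on the classified units $u^{(a,\zeta)}$, from which the affine data $(\lambda,\xi,U)$ and the twisted composition law fall out, and the explicit cocycle $e^{i\lambda t}W(1_{(0,t)}\otimes\xi)\Gamma(\id\otimes U)\otimes\id$ realizes every triple. The only step you state without comment — that $g$ is independent of $a$ and $f(a,\zeta)=a+f(0,\zeta)$ — is immediate from linearity of $\theta_t$ and $u^{(a,\zeta)}(t)=e^{at}u^{(0,\zeta)}(t)$, so there is no real gap.
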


Since every type I $E_0$-semigroup is cocycle conjugate to one of the CCR flows, Theorem \ref{gauge} is often very useful 
in order to show type I criteria in specific constructions of $E_0$-semigroups 
(see, for example, \cite{I2007},\cite{IS2008},\cite{IS2010}). 

\subsection{Dilation theory}
One of the richest sources of $E_0$-semigroups is semigroups of unital normal completely positive maps, which 
are often easier to construct than $E_0$-semigroups. 

\begin{definition} 
A $CP_0$-semigroup is a family of unital normal completely positive maps  $P=\{P_t\}_{t\geq 0}$ of 
a von Neumann algebra $N$ satisfying the following conditions: 
\begin{itemize}
\item[(i)] $P_0=\id$, 
\item[(ii)] $P_s\circ P_t=P_{s+t}$ for all $s,t\geq 0$, 
\item[(iii)] For every $a\in N$ and every normal functional $\varphi\in N_*$, 
the function $[0,\infty)\ni t\mapsto \varphi(P_t(a))\in \C$ is continuous. 
\end{itemize}
\end{definition}

A corner $N$ of a von Neumann algebra $M$ is a von Neumann subalgebra of the particular form $N=pMp$, 
where $p$ is a projection. 
The central carrier of $p$ in $M$ is the smallest projection in the center $Z(M)$ of $M$ dominating $p$. 

\begin{definition} A dilation of a $CP_0$-semigroup $P$ acting on $N$ consists of a von Neumann algebra $M$, 
a projection $p\in M$, and an $E_0$-semigroup $\alpha$ acting on $M$ satisfying 
\begin{itemize}
\item[(i)] $N$ is the corner $pMp$ of $M$, 
\item[(ii)] $P_t(a)=p\alpha_t(a)p$ for any $a\in N$ and $t\geq 0$. 
\end{itemize}
If moreover the following two conditions are satisfied, we say that the dilation is minimal:   
\begin{itemize}
\item[(iii)] $M$ is generated by $\cup_{t\geq 0}\alpha_t(N)$, 
\item[(iv)] the central carrier $c(p)$ of $p$ in $M$ is $1_M$. 
\end{itemize} 
\end{definition}
Note that (ii) implies that $\{\alpha_t(p)\}_{t\geq 0}$ is an increasing family of projections, 
and if (iii) is satisfied, it converges to the unit of $M$ in the strong operator topology as $t$ tends to 
$\infty$. 
In the case of a minimal dilation, if $N$ is a factor, so is $M$. 

We identify two dilations $(M,p,\alpha)$ and $(R,q,\beta)$ if there exists an isomorphism $\theta$ from $M$ 
onto $R$ such that the restriction of $\theta$ to $pMp=qRq=N$ is the identity map and $\theta\circ \alpha_t=\beta_t\circ \theta$ holds 
for any $t\geq 0$. 

\begin{theorem}\label{dilation} There exists a unique minimal dilation for any $CP_0$-semigroup 
acting on a von Neumann algebra with separable predual. 
\end{theorem}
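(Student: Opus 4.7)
The plan is to follow Bhat's inductive construction for existence and to establish uniqueness by exhibiting a canonical unitary intertwiner. For existence, one iterates Stinespring dilations of the individual $P_t$ along finite partitions of $[0,\infty)$ and passes to an inductive limit; for uniqueness, both dilations are recovered from the same abstract data living on the underlying corner.

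For existence, let $N$ act in standard form on a separable Hilbert space $H$. For each $t > 0$, the normal ucp map $P_t$ admits a minimal Stinespring dilation $(\cK_t, \pi_t, V_t)$ consisting of an isometry $V_t: H \to \cK_t$, a normal representation $\pi_t$ of $N$ on $\cK_t$, and the identity $P_t(a) = V_t^* \pi_t(a) V_t$, with $\pi_t(N)V_t H$ total in $\cK_t$. The semigroup law $P_{s+t} = P_s \circ P_t$ forces a canonical isometric embedding $\cK_s \hookrightarrow \cK_{s+t}$ (via a secondary Stinespring construction inside $\cK_s$), and more generally for each finite partition $F = \{0 = t_0 < t_1 < \cdots < t_n\}$ one obtains an iterated dilation space $\cK_F$ with compatible embeddings $\cK_F \hookrightarrow \cK_{F'}$ whenever $F \subset F'$. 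The inductive limit $\cK$ is a separable Hilbert space containing $H$, and translation of partition points by $s$ lifts to a semigroup of isometries on $\cK$ implementing an endomorphism semigroup $\{\alpha_s\}_{s \geq 0}$ on a von Neumann algebra $M \subset B(\cK)$ generated by the natural copies of $N$ sitting at every starting time. Letting $p \in B(\cK)$ be the orthogonal projection onto $H$, the identifications yield $pMp = N$ and $p \alpha_s(a) p = P_s(a)$ for $a \in N$ by construction, while (iii) and (iv) follow from the minimality of the chosen Stinespring dilations together with the inductive limit construction.

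For uniqueness, suppose $(M, p, \alpha)$ and $(R, q, \beta)$ are two minimal dilations of $P$, with underlying Hilbert spaces $\cH_M, \cH_R$ containing $H = p\cH_M = q\cH_R$. Conditions (iii) and (iv) together imply that vectors of the form $\alpha_{t_1}(a_1)\alpha_{t_2}(a_2)\cdots \alpha_{t_n}(a_n)\xi$ with $\xi \in H$, $a_i \in N$, and $t_i \geq 0$ span a dense subspace of $\cH_M$, and symmetrically on the $R$-side. A direct computation using $p \alpha_t(a) p = P_t(a) = q \beta_t(a) q$ together with the semigroup identity shows that the inner product of two such products depends only on the data $(N, P, H)$. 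This defines a canonical unitary $\cH_M \to \cH_R$ fixing $H$ pointwise and intertwining $\alpha$ with $\beta$; its adjoint action furnishes the isomorphism $\theta: M \to R$ with $\theta|_N = \id$ and $\theta \circ \alpha_t = \beta_t \circ \theta$.

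The principal obstacles I anticipate are two. First, verifying the pointwise $\sigma$-weak continuity of $\alpha$ required by the $E_0$-semigroup definition: the inductive limit construction is a priori only algebraic, and one must use the separability of $N_*$ together with condition (iii) of the $CP_0$-semigroup definition to promote measurability of the cross-sections to the required continuity. Second, establishing the central-carrier condition (iv), that $c(p) = 1_M$, is the genuinely delicate minimality requirement: one must argue that the minimal Stinespring dilations chosen at every stage leave no piece of $Z(M)$ disjoint from $p$, which requires careful bookkeeping at each step of the inductive limit and is typically where one must invoke the minimality of Stinespring more than once.
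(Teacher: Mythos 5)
The paper itself contains no proof of Theorem \ref{dilation}: it is a survey statement, with existence attributed to \cite{B96} for type I factors and to \cite{B99},\cite{BSk2001} (see also \cite{MS2002},\cite{MS2007}) in general, and uniqueness to Arveson \cite{Arv97-1},\cite[Section 8.9]{Arv2003-1}. Your outline follows exactly that route (iterated dilations over partitions and an inductive limit for existence; a moment-polynomial intertwiner for uniqueness), so the strategy is the right one, but as written it has genuine gaps rather than merely technical ones.

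First, the continuity point you defer is not a routine promotion: the inductive-limit constructions need $t\mapsto P_t(a)$ to be continuous in the strong operator topology, and deducing this from the weak continuity assumed in the definition of a $CP_0$-semigroup is precisely the Markiewicz--Shalit theorem \cite{MS2010}; a Kadison--Schwarz argument gives strong continuity at $t=0$, but left continuity at general $t$ is the hard part, and the paper explicitly notes that \cite{B99},\cite{BSk2001} use this fact, proved only later. So ``separability of $N_*$ plus condition (iii)'' names the difficulty without resolving it. Second, in the uniqueness argument you silently assume the two dilations come spatially represented with $p\cH_M=q\cH_R=H$ carrying the given representation of $N$; dilations are abstract triples and $pMp\cong N$ is only an algebraic identification, so one must first induce compatible faithful normal representations from the one of $N$ on $H$ (this is where $c(p)=1_M$ is used). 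Moreover, the assertion that the inner products ``depend only on $(N,P,H)$'' rests on a lemma you never state: unitality of $P_t$ forces $p\leq\alpha_t(p)$, hence $p\alpha_t(x)p=p\alpha_t(pxp)p=P_t(pxp)$ for every $x\in M$, and iterating this identity at a time index achieving the minimum reduces any compression $p\alpha_{t_1}(a_1)\cdots\alpha_{t_n}(a_n)p$ to a universal moment polynomial in $P$; that lemma, not the bare identity $p\alpha_t(a)p=P_t(a)$ on $N$, is what makes your map well defined and isometric. Third, for a general von Neumann algebra $N$ the iteration of minimal Stinespring dilations of the individual maps $P_t$ does not by itself retain the $N$-bimodule structure needed to define the embeddings and keep all representations normal; this is why the general case in the cited literature is carried out with GNS correspondences and Hilbert modules (Bhat--Skeide) or product systems of correspondences (Muhly--Solel), plain Stinespring iteration being adequate essentially only for $N=B(H_0)$ as in \cite{B96}. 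By contrast, condition (iv), which you single out as the delicate point, is automatic in these constructions: the inductive-limit space is by definition the closed span of products of generators applied to $H$, so the central carrier of $p$ is $1_M$; the genuine difficulties are normality and continuity, not the bookkeeping of minimality.
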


Bhat \cite{B96} proved Theorem \ref{dilation} in the case of type I factors based on his previous results 
\cite{BP94},\cite{BP95}, and he computed the product systems of the dilations. 
The existence in the general case was obtained in \cite{B99},\cite{BSk2001} 
(see also \cite{MS2002},\cite{MS2007}). 
These works use the fact that the map $[0,\infty)\ni t\mapsto P_t(a)\in N$ is continuous in the strong operator topology, 
which was proved by Markiewicz and Shalit \cite{MS2010} later.
Arveson \cite{Arv97-1},\cite[Section 8.9]{Arv2003-1} showed the uniqueness in the general case.  
The uniqueness in the non-unital case appears very subtle (see \cite{B2003}). 

When $P$ is a $CP_0$-semigroup acting on a type I factor, Arveson \cite{Arv97-2} described 
the units of the minimal dilation in terms of $P$. 
Answering a question raised in \cite{B96} about the dilations of $CP_0$-semigroups acting on matrix algebras, 
Arveson \cite{Arv99} showed the following result (cf. \cite{P99-2}). 

\begin{theorem} Let $P$ be a $CP_0$-semigroup acting on $B(H_0)$, with $H_0$ possibly finite dimensional, 
which is not a semigroup of automorphisms. 
If the generator $L$ of $P$ is bounded, then the minimal dilation $\alpha$ of $P$ is an $E_0$-semigroup of type I,  
and the index $\ind(\alpha)$ is the rank of $L$. 
\end{theorem}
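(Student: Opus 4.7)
The plan is to combine the Christensen--Evans structure theorem for bounded generators of normal completely positive semigroups with an explicit dilation via Hudson--Parthasarathy quantum stochastic calculus, and then to appeal to Theorem \ref{typeI} together with the uniqueness part of Theorem \ref{dilation}. First I would invoke the Christensen--Evans theorem to write the bounded generator in the standard form
\begin{equation*}
L(a) = \Phi(a) + k^*a + ak,
\end{equation*}
where $\Phi\colon B(H_0)\to B(H_0)$ is a normal completely positive map and $k\in B(H_0)$, with the unitality constraint $\Phi(1)+k+k^*=0$ forced by $L(1)=0$. The Stinespring--Kraus theorem applied to $\Phi$ produces a Hilbert space $K$ with $\dim K = \mathrm{rank}(\Phi) = \mathrm{rank}(L)$ together with an operator $V\in B(H_0,H_0\otimes K)$ such that $\Phi(a)=V^*(a\otimes 1_K)V$; the hypothesis that $P$ is not a semigroup of automorphisms forces $\Phi\neq 0$, hence $K\neq \{0\}$.

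Next I would construct an explicit dilation on $H := H_0\otimes e^{L^2((0,\infty),K)}$ by solving the Hudson--Parthasarathy QSDE with coefficients built from $k$ and $V$, producing a left $(\id\otimes \alpha^K)$-cocycle of unitaries $\{U_t\}_{t\geq 0}$. The perturbed semigroup $\tilde\alpha_t := \Ad(U_t)\circ(\id_{B(H_0)}\otimes \alpha^K_t)$ is then an $E_0$-semigroup on $B(H)$, and a quantum It{\^o} computation on the exponential domain shows that compression by the vacuum state of the Fock factor recovers $P$; so $(B(H), p_\Omega, \tilde\alpha)$ with $p_\Omega = 1_{H_0}\otimes |\Omega\rangle\langle\Omega|$ is a (possibly non-minimal) dilation. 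Since $\tilde\alpha$ is a cocycle perturbation of $\id_{B(H_0)}\otimes \alpha^K$, its product system is isomorphic to $\cE_{\alpha^K}\cong E^K$, so $\tilde\alpha$ itself is of type I with index $\dim K$ by Theorem \ref{typeI}.

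To conclude I would pass from $\tilde\alpha$ to the genuinely minimal dilation by cutting down to the von Neumann subalgebra generated by $\cup_{t\geq 0}\tilde\alpha_t(B(H_0)\otimes 1)$ and applying the uniqueness part of Theorem \ref{dilation}. The delicate point, which I expect to be the main obstacle, is to verify that this cut-down dilation $\alpha$ remains of type I with index exactly $\dim K$, rather than being a merely sub-product-system of $E^K$ of smaller index. Here I would invoke Arveson's description of the units of the minimal dilation in terms of $P$ from \cite{Arv97-2}: each norm-continuous semigroup $T$ on $H_0$ satisfying an appropriate $P$-dominance condition produces a unit of the minimal product system, and the boundedness of $L$ provides a family of such $T$'s parametrised essentially by vectors in $H_0\otimes K$, so that the resulting units are total in every fibre. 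This forces the minimal product system to be of type I, and matching the unit parameters against the Christensen--Evans data of $L$ yields $\ind(\alpha)=\dim K=\mathrm{rank}(L)$. The non-automorphism hypothesis is used throughout: without it $\Phi=0$ and the ``dilation'' is already the semigroup of automorphisms itself on $B(H_0)$, for which the statement is vacuous.
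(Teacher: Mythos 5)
First, note that the paper does not prove this statement at all: it is quoted as Arveson's theorem from \cite{Arv99} (cf. \cite{P99-2}), with the rank of $L$ defined via the metric operator space in \cite[Chapter 10]{Arv2003-1}. So your proposal has to be judged on its own, and as written it has a genuine gap exactly at the decisive step. Your plan is: put $L$ in Christensen--Evans form, build the Hudson--Parthasarathy cocycle dilation $\tilde\alpha$ on $B(H_0\otimes e^{L^2((0,\infty),K)})$, observe $\tilde\alpha$ is a cocycle perturbation of $\id\otimes\alpha^K$ and hence type I$_{\dim K}$, then cut down to the minimal dilation and argue it is still type I of index $\dim K$. Everything up to the cut-down is fine (and standard for bounded coefficients), but the last step is precisely the content of the theorem, and your justification for it --- ``the boundedness of $L$ provides a family of units that are total in every fibre'' --- is an assertion, not an argument. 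What \cite{Arv97-2} supplies is the bijection between units of $P$ and units of the minimal dilation together with the covariance/index formula; it does \emph{not} give totality. Proving that the minimal dilation is \emph{completely} spatial when $L$ is bounded is the main theorem of \cite{Arv99}, and its proof (via the metric operator space of $L$) is substantial; alternatively one would need a nontrivial auxiliary theorem such as ``every product subsystem of an exponential product system is exponential'' (cf. the inclusion-system route of \cite{BM2010}), together with an argument that the minimal dilation's product system actually embeds into $E^K$ --- neither of which you establish. So the HP construction, by itself, does not transfer type I-ness or the index value from $\tilde\alpha$ to the minimal $\alpha$.

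A second, smaller but real, problem is the identity $\dim K=\mathrm{rank}(\Phi)=\mathrm{rank}(L)$. The Christensen--Evans decomposition $L(a)=\Phi(a)+k^*a+ak$ is not unique: one may replace the Kraus operators $v_i$ of $\Phi$ by $v_i+\lambda_i 1$ and absorb the difference into $k$, which changes $\mathrm{rank}(\Phi)$ (e.g.\ $\Phi=\id$, $k=-\tfrac12+ih$ gives $L=i[h,\cdot]$, with $\mathrm{rank}(\Phi)=1$ but $\mathrm{rank}(L)=0$). Arveson's $\mathrm{rank}(L)$ is the dimension of the canonically defined metric operator space, and to get $\dim K=\mathrm{rank}(L)$ you must normalize the decomposition (Kraus operators linearly independent modulo scalars) or work with the metric operator space directly; this normalization is also what is needed to match unit parameters against the data of $L$ in your final index computation. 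In short: the architecture is reasonable and you correctly located the main obstacle, but the proposal defers the heart of the proof (complete spatiality of the minimal dilation and the exact index count) to a citation that does not contain it.
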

For the definition of the rank of $L$, see \cite[Chapter 10]{Arv2003-1}. 

Arveson \cite{Arv97-4},\cite{Arv2000} applied dilation theory to what is called interaction theory, 
which, roughly speaking, deals with coupling of two $E_0$-semigroups, one for the past and the other for the future, 
with prescribed invariant normal states. 

Markiewicz \cite{Mar2003} computed the product systems for the minimal dilations of concrete $CP_0$-semigroups 
acting on $B(L^2(\R))$ arising from a modified Weyl-Moyal quantization of convolution semigroups of probability 
measures on $\R^2$, including the CCR heat flow discussed by Arveson \cite{Arv2002-1} as a special case. 
Despite that the generators of these $CP_0$-semigroups are unbounded, the resulting product systems 
are still of type I (in fact type I$_2$). 

Shalit-Solel \cite{SS2009} and Bhat-Mukherjee \cite{BM2010} recently introduced essentially the same notion, 
called subproduct systems in \cite{SS2009}, and inclusion systems in \cite{BM2010}, 
which had been implicitly used in \cite{BSk2001}, \cite{Mar2003}, \cite{MS2002}. 
Their role in product systems is somewhat similar to the role of $CP_0$-semigroups in $E_0$-semigroups. 
See \cite{BLMS2011}, \cite{M2011}, \cite{P2004}, \cite{S2010} for related results.

\section{Beyond Arveson: Type II case}
The first example of an $E_0$-semigroup of type II was constructed implicitly by Tsirelson-Vershik \cite{TV98} 
in 1998 via the noise theory, and about the same time by Powers \cite{P99-1} via the boundary representation. 
Later on, both Powers \cite{P2003-1} and Tsirelson \cite{T2003-1} constructed uncountably many type II$_0$ examples. 
Thanks to Arveson's addition formula, we have $\mathrm{I}_n\otimes\mathrm{II}_0=\mathrm{II}_n$, and so 
type II$_n$ examples exist for $n=0,1,\ldots,\infty$. 
 
\subsection{Tsirelson's probabilistic method}
Tsirelson introduced the following concept of a noise in probability theory (see \cite{T2003-1},\cite{T2004}).

\begin{definition} A noise consists of a probability space $(\Omega, \cF,P)$, sub-$\sigma$-fields $\cF_{s,t}\subset \cF$ 
for $s,t\in \R$ with $s<t$, and a measure preserving action $T$ of $\R$ on the probability space satisfying 
\begin{itemize}
\item[(i)] $\cF_{r,s}\otimes \cF_{s,t}=\cF_{r,t}$ for any $r<s<t$, 
\item[(ii)] $T_h$ sends $\cF_{s,t}$ to $\cF_{s+h,t+h}$, 
\item[(iii)] $\cF$ is generated by $\cup_{s<t}\cF_{s,t}$, 
\item[(iv)] $P(A \ominus T_h^{-1}(A))\to 0$ as $h\to0$ for any $A\in \cF$, where $A\ominus B$ is 
the symmetric difference of $A$ and $B$.  
\end{itemize}
\end{definition}

A typical example of a noise is white noise already discussed in Remark \ref{white noise}. 
Every noise gives rise to a product system by $E(t)=L^2(\cF_{0,t})$ and 
$\xi\cdot \eta=\xi(\eta\circ T_{-s})$ for $\xi\in E(s)$, $\eta\in E(t)$. 
The resulting product system has at least one unit given by the constant function $1$. 
Using the factorization $L^2(\cF_{-\infty,\infty})=L^2(\cF_{-\infty,0})\otimes L^2(\cF_{0,\infty})$ and the 
1-parameter unitary group $\{U_t\}_{t\in \R}$ arising from $T$, we can directly construct the corresponding 
$E_0$-semigroup $\alpha$ acting on the type I factor $B(\cF_{0,\infty})$ by 
$1\otimes \alpha_t(A)=U_t(1\otimes A)U_t^*$ for $A\in B(L^2(\cF_{0,\infty}))$. 

A noise arising from a L{\'e}vy process is called a classical noise, and Tsirelson showed that 
every noise contains the maximal classical noise, called the classical part, 
which corresponds to the subspaces generated by decomposable vectors in $E(t)$. 
A black noise is a noise with trivial classical part, which gives rise to a product system of type II$_0$. 
Tsirelson-Vershik \cite{TV98} showed 

\begin{theorem} There exists a black noise. 
\end{theorem}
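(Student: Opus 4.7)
The plan is to exhibit a noise whose associated product system has no non-trivial decomposable vectors, equivalently (by Tsirelson's characterization of the classical part) a noise whose classical sub-noise is trivial. The strategy splits into three stages: construct a candidate noise with a highly non-classical multiscale structure, verify the noise axioms, and rule out non-trivial classical vectors.

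First I would construct a candidate noise as a scaling limit of a multiscale discrete model whose combination law at each scale is non-abelian. Fix a non-abelian compact group $G$. For each $n\in\N$, attach to each dyadic interval $I$ of length $2^{-n}$ an independent $G$-valued random variable $X_I^n$, subject to the consistency relation $X_I^n = X_{I_\ell}^{n+1}\cdot X_{I_r}^{n+1}$, where $I_\ell,I_r$ are the two dyadic halves of $I$ and the product is taken in $G$; the marginal laws are chosen so that the system is consistent in $n$. The projective limit yields a probability space $(\Omega,\cF,P)$ carrying a compatible family of $G$-valued increments indexed by all dyadic subintervals of $\R$. For dyadic $s<t$, set $\cF_{s,t}$ to be the $\sigma$-algebra generated by $X_I^n$ with $I\subset(s,t)$ at every scale $n$, and extend to arbitrary $s<t$ by monotone limits from above. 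Shifts by dyadic rationals act naturally on these indices and, by uniform $L^2$-approximation, lift to a continuous measure preserving action $T$ of $\R$.

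Next, I would verify the noise axioms. The factorization $\cF_{r,s}\otimes\cF_{s,t}=\cF_{r,t}$ at dyadic cut points is immediate from the independence of the $X_I^n$ attached to disjoint intervals at every scale, and it extends to arbitrary cut points via the continuity of $T$ together with a monotone class argument. The shift-equivariance (ii), the generation property (iii), and the $L^2$-continuity (iv) are routine from the construction.

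The crux is showing that the classical part is trivial. By Tsirelson's theory the classical part corresponds to the closed linear span $\overline{D(t)}$ of decomposable vectors in $E(t)=L^2(\cF_{0,t})$, and blackness amounts to $\overline{D(t)}=\C\cdot 1$. Any decomposable $\xi\in E(t)$ must, for every $n$, respect the tensor factorization of $E(t)$ induced by the scale-$2^{-n}$ partition of $(0,t)$. I would argue that the non-commutativity of $G$ obstructs the existence of non-trivial linearizable vectors along this refining sequence: a direct moment computation on the joint law $(X_I^n)_I$ shows that the projection of any candidate $\xi\in D(t)$ onto the span of vectors depending on at most one scale-$2^{-n}$ interval decays in $L^2$ as $n\to\infty$, while the decomposability hypothesis itself forces the higher-order contributions to vanish. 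Consequently $\xi\in\C\cdot 1$, and the noise is black. The main obstacle is this last quantitative step: matching the asymptotic behaviour of the non-abelian refinement coupling to Tsirelson's criterion for the absence of classical components is the essential technical content, and is what distinguishes the Tsirelson-Vershik construction from the classical Gaussian/Lévy noises which, by Theorem \ref{typeI}, can only produce type I product systems.
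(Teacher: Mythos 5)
Your construction cannot produce a black noise, and the failure is exactly at the step you leave unproved. A consistent family of independent $G$-valued variables attached to dyadic intervals with the combination law $X_I^n=X_{I_\ell}^{n+1}\cdot X_{I_r}^{n+1}$ is precisely a process with independent multiplicative increments, i.e.\ a L\'evy process on the compact group $G$ (consistency of the marginals forces a convolution semigroup, and your axiom (iv) forces continuity in probability). Non-commutativity of $G$ does not make such a noise non-classical: the group-valued process can be recovered from, and generates the same filtration as, a classical driving noise (the stochastic logarithm/anti-development in the diffusion case, the Poisson point process of jump times and jump values in the pure-jump case), so the noise is generated by ordinary real-valued independent-increment processes. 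Its classical part is everything, the decomposable vectors (exponential vectors of the driving L\'evy processes, which are not of the simple form $f(X_I^n)$ you are implicitly projecting onto) are total in $E(t)$, and the associated product system is of type I by Theorem \ref{typeI} and Arveson's decomposability theorem \cite{Arv97-3}. This is exactly the warning made in the text right after Theorem \ref{typeI}: replacing Brownian motion in Remark \ref{white noise} by more general L\'evy processes --- group-valued or not --- still yields exponential product systems; blackness requires a genuinely non-classical mechanism, not a non-abelian combination rule for independent increments. Consequently your ``direct moment computation'' asserting that decomposable vectors must be constant cannot exist; it would contradict the type~I$_n$ structure of the noise you actually built.

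For comparison, the paper itself offers no proof of this theorem; it is quoted from Tsirelson--Vershik \cite{TV98}, and the known proofs are of a very different character. One must produce a noise for which triviality of the classical part is established by a genuinely infinite-dimensional, non-L\'evy argument --- in \cite{TV98} via scaling limits of suitable discrete models yielding non-Fock factorizations, and in later work via objects such as coalescing Brownian flows (see \cite{T2004}), where blackness is proved by showing that every square-integrable functional with a chaos-type expansion supported on finitely many ``particles'' is constant. The paper emphasizes that such objects are singular and hard to construct; any scheme whose local building blocks are independent increments composed through a group operation, however non-abelian, stays on the classical side of this divide.
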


A black noise is a singular object, and is not so easy to construct. 
Very few examples are known. 

Like the Wiener-Ito chaos decomposition, the space $L^2(\Omega,\cF,P)$ has a canonical decomposition. 
However, subspaces for a finite number of particles do not generate the whole space unless the noise is classical. 
From this decomposition, random sets arise as an invariant of the noise, and it also makes sense as an 
invariant of product systems of type II$_0$. 
A variant of this invariant adapted to type II product systems was extensively studied by Liebscher \cite{L2009}. 
Among others he showed that there exists a type II$_n$ product system for each $n=0,1,\ldots,\infty$, that never 
splits as a tensor product of two product systems. 

Tsirelson \cite{T2003-1} introduced the notion of homogeneous continuous products of measure classes (HCPMC), 
more general objects than noises. 
This idea originated from Vershik according to Tsirelson. 
HCPMCs are more flexible than noises, and are still good enough to produce $E_0$-semigroups. 
The main difference of an HCPMC from a noise is that the independence $\cF_{r,s}\otimes \cF_{s,t}=\cF_{r,s}$ 
does not necessarily hold for $P$, but it is required to hold for a measure equivalent to $P$. 
Typical examples of HCPMCs arise from random sets associated with Markov processes. 
Tsirelson \cite{T2003-1} showed that they give uncountably many type II$_0$ product systems that are not anti-isomorphic to themselves.

Tsirelson \cite{T2008} recently constructed a type II$_1$ product system, by using the noise theory, whose automorphism group 
does not act on the set of normalized units transitively. 
This shows that choosing an arbitrary unit is not always justified in order to define an invariant of product systems. 

\subsection{Powers $CP$-flows}
Powers \cite{P2003-1},\cite{P2003-2} found a systematic way to construct $E_0$-semigroups of type II by using 
dilation theory. 
For simplicity, we assume that $CP$-flows are unital in this note, though non-unital ones also play important roles in 
Powers's argument. 

\begin{definition} Let $K$ be a separable complex Hilbert space, and let $H_0=L^2((0,\infty),K)$. 
We denote by $S=\{S_t\}_{t\geq 0}$ the shift semigroup acting on $H_0$. 
A $CP$-flow $P$ is a $CP_0$-semigroup acting on $B(H_0)$ satisfying $S_tA=P_t(A)S_t$ for all $A\in B(H_0)$ and $t\geq 0$. 
\end{definition}

The minimal dilation of a $CP$-flow always has a unit. 
On the other hand, any spatial $E_0$-semigroup is cocycle conjugate to an $E_0$-semigroup that is a $CP$-flow. 
Thus it is important to understand the structure of $CP$-flows. 
Powers \cite{P2003-1} showed that all of the information of a $CP$-flow is encoded in its boundary weight map. 

We define $\Lambda:B(K)\rightarrow B(H_0)$ by 
\begin{equation*}\Lambda(A)f(x)=e^{-x}Af(x),\quad f\in H_0.\end{equation*}
For simplicity, we include complete positivity and the unitality condition in the definition of boundary weight maps. 

\begin{definition} Let $\mathfrak{A}(H_0)=(1_{H_0}-\Lambda(1_K))^{1/2}B(H_0)(1_{H_0}-\Lambda(1_K))^{1/2}$. 
A boundary weight $\mu$ is a linear functional of $\mathfrak{A}(H_0)$ such that the linear functional 
\begin{equation*}B(H_0)\ni A\mapsto \mu((1_{H_0}-\Lambda(1_K))^{1/2}A(1_{H_0}-\Lambda(1_K))^{1/2})\in \C,\end{equation*} 
is bounded and normal. 
We denote by $\mathfrak{A}(H_0)_*$ the set of boundary weights. 
A boundary weight map $\omega$ is a completely positive map $\omega:B(K)_*\rightarrow \mathfrak{A}(H_0)_*$ satisfying 
$\omega(\rho)(1_{H_0}-\Lambda(1_K))=\rho(1_K)$ for any $\rho\in B(K)_*$. 
\end{definition}

For a normal map $\Phi$ between von Neumann algebras, we denote by $\hat{\Phi}$ the map between the preduals 
induced by $\Phi$. 
Being a semigroup, a $CP$-flow $P$ is determined by its resolvent 
\begin{equation*}R_P(A)=\int_0^\infty e^{-t}P_t(A)dt.\end{equation*}
On the other hand, since we have 
\begin{equation*}P_t(A)=S_tAS_t^*+(1-S_tS_t^*)P_t(A)(1-S_tS_t^*),\quad A\in B(H_0),\end{equation*} 
the first approximation of $R_P$ is 
\begin{equation*}\Gamma(A)=\int_0^\infty e^{-t}S_tAS_t^*dt.\end{equation*}
Our task is to describe the difference $R_P-\Gamma$, which is a completely positive map. 
Powers \cite{P2003-1} showed the following. 

\begin{theorem} Let the notation be as above. 
\begin{itemize}
\item[(i)] For any $CP$-flow $P$, there exists a unique boundary weight map $\omega$ satisfying  
\begin{equation*}\hat{R_P}(\eta)=\hat{\Gamma}(\omega(\hat{\Lambda}\eta)+\eta),\quad \forall \eta\in B(H_0)_*.\end{equation*} 
The map $\omega$ is called the boundary weight map associated with $P$. 
\item[(ii)] For a boundary weight map $\omega$, we set $\omega_t(\rho)(A)=\omega(\rho)(S_tS_t^*AS_tS_t^*)$. 
If $\id+\hat{\Lambda}\omega_t$ is invertible and $\hat{\pi}_t:=\omega_t\circ(\id+\hat{\Lambda}\omega_t)^{-1}$ 
is a completely positive contraction for any $t>0$, then $\omega$ is the boundary weight map associated with a $CP$-flow.  
\end{itemize}
\end{theorem}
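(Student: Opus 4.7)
My plan starts from the decomposition
\begin{equation*}
P_t(A) = S_tAS_t^* + (1-S_tS_t^*)P_t(A)(1-S_tS_t^*),
\end{equation*}
which follows from the intertwining relation $S_tA=P_t(A)S_t$: taking adjoints yields $AS_t^*=S_t^*P_t(A)$, and combining these two identities forces the off-diagonal corners of $P_t(A)$ with respect to the projection $S_tS_t^*$ to vanish. Integrating against $e^{-t}dt$ gives $R_P=\Gamma+D_P$, where the boundary correction
\begin{equation*}
D_P(A):=\int_0^\infty e^{-t}(1-S_tS_t^*)P_t(A)(1-S_tS_t^*)\,dt
\end{equation*}
is concentrated near $x=0$; one immediately checks $D_P(1_{H_0})=1_{H_0}-\Gamma(1_{H_0})=\Lambda(1_K)=e^{-x}$. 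The content of part (i) is therefore to identify the predual of $D_P$ with $\hat\Gamma\circ\omega\circ\hat\Lambda$.

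For part (i), I would establish three ingredients. First, $\hat\Lambda:B(H_0)_*\to B(K)_*$ is surjective, since any normal functional on $B(K)$ is realised by integrating a suitable $L^1$ weight supported near $x=0$ against the factor $e^{-x}$. Second, $\Gamma$ actually maps $B(H_0)$ into $\mathfrak{A}(H_0)$ (because $\Gamma(A)\leq \|A\|(1-\Lambda(1_K))$), so $\hat\Gamma$ extends naturally to $\mathfrak{A}(H_0)_*\to B(H_0)_*$, and this extension is injective because $\Gamma(B(H_0))$ is weak-$*$ dense in $\mathfrak{A}(H_0)$ by Laplace inversion of the identity $\mu(\Gamma(A))=\int_0^\infty e^{-t}\mu(S_tAS_t^*)dt$. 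Third and most importantly, the pairing $\eta\mapsto\eta(D_P(A))$ factors through $\hat\Lambda$: the integrand $(1-S_tS_t^*)P_t(A)(1-S_tS_t^*)$ is cut off by $1_{(0,t)}$ on each side, so only the $e^{-x}$-weighted germ of $\eta$ at the boundary contributes, and this germ is precisely $\hat\Lambda\eta$. From these three points $\omega$ is forced and unique; complete positivity is inherited from each $P_t$, and the normalisation $\omega(\rho)(1_{H_0}-\Lambda(1_K))=\rho(1_K)$ is the identity $D_P(1_{H_0})=\Lambda(1_K)$ reexpressed in the predual.

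For part (ii) I would reverse the construction. Given $\omega$ satisfying the hypotheses, define the candidate resolvent $\hat R(\eta):=\hat\Gamma(\omega(\hat\Lambda\eta)+\eta)$, then interpret $\omega_t$ as the boundary weight viewed after shifting by $t$: the operator $\id+\hat\Lambda\omega_t$ is precisely what must be inverted to solve for the resolvent of the would-be semigroup started at time $t$, and the CP contractivity of $\hat\pi_t=\omega_t\circ(\id+\hat\Lambda\omega_t)^{-1}$ lets one define $P_t$ on each scale $t>0$ by
\begin{equation*}
P_t(A)=S_tAS_t^*+\Phi_t\bigl((1-S_tS_t^*)A(1-S_tS_t^*)\bigr),
\end{equation*}
where $\Phi_t$ is the completely positive map whose predual is encoded by $\hat\pi_t$. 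The intertwining $S_tA=P_t(A)S_t$ is built into this formula, and the semigroup law $P_{s+t}=P_s\circ P_t$ follows from compatibility identities relating $\omega_{s+t}$ to $\omega_s$ and $\omega_t$ under the shift, which in turn come from the resolvent identity for $\hat R$.

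The main obstacle is the delicate predual bookkeeping between $B(H_0)_*$ and $\mathfrak{A}(H_0)_*$. At every step one must verify that outputs remain normal on the relevant algebra (which for $\mathfrak{A}(H_0)$ requires controlled decay near $x=0$ after sandwiching by $(1-\Lambda(1_K))^{1/2}$), and in part (ii) one must show that the pointwise CP maps arising from Laplace inversion really assemble into a strongly continuous semigroup, not just a measurable family of CP maps. This last step, extracting the semigroup law from the resolvent identity together with the compatibility between $\hat\pi_s$, $\hat\pi_t$ and $\hat\pi_{s+t}$, is the most technically involved part of Powers's argument.
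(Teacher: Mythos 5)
Note first that the paper itself does not prove this theorem; it is quoted from Powers \cite{P2003-1}, whose proof is a long generator/resolvent analysis, so your sketch has to stand on its own. Your opening moves are fine: the corner decomposition $P_t(A)=S_tAS_t^*+(1-S_tS_t^*)P_t(A)(1-S_tS_t^*)$ (already displayed in the paper), the split $R_P=\Gamma+D_P$, surjectivity of $\hat\Lambda$, and injectivity of $\hat\Gamma$ on boundary weights give the \emph{uniqueness} in (i), which is the easy half. The genuine gap is the existence half: you must show that $\eta\circ D_P$ depends on $\eta$ only through $\hat\Lambda\eta$, and moreover equals $\hat\Gamma(\mu)$ for a boundary weight $\mu$ depending completely positively on $\hat\Lambda\eta$. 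Your justification --- that the integrand is cut off by $1_{(0,t)}$ so ``only the $e^{-x}$-weighted germ of $\eta$ at the boundary contributes'' --- does not hold: $1-S_tS_t^*$ is the projection onto $L^2((0,t),K)$, these subspaces exhaust $H_0$ as $t\to\infty$, and after integration against $e^{-t}\,dt$ the functional $\eta$ is tested against operators living on all of $(0,\infty)$, not on a germ at $0$. The simultaneous appearance of the inner $\hat\Lambda$ and the outer $\hat\Gamma$ is the real content of (i); in Powers's argument it comes from a renewal/differential-equation analysis of the generator of $P$ with a boundary source term, using the semigroup law for all $t$ at once, and nothing in your sketch supplies this. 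Likewise, complete positivity of $\omega$ cannot simply be ``inherited from each $P_t$'' before $\omega$ has actually been constructed.

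In (ii) there is an outright error: your reconstruction formula $P_t(A)=S_tAS_t^*+\Phi_t\bigl((1-S_tS_t^*)A(1-S_tS_t^*)\bigr)$ is not the decomposition above --- there the second summand is the corner \emph{of} $P_t(A)$, which depends on all of $A$, not a CP map applied to the corner of $A$. Indeed no unital CP-flow can have its boundary corner depend only on $(1-S_tS_t^*)A(1-S_tS_t^*)$: iterating the semigroup law with your ansatz shows the corner of $P_r(A)$ would depend only on $(1-S_{r/2^n}S_{r/2^n}^*)A(1-S_{r/2^n}S_{r/2^n}^*)$ for every $n$, hence $P_r(S_\delta S_\delta^*)=S_{r+\delta}S_{r+\delta}^*$ for all small $\delta>0$, and letting $\delta\downarrow 0$ normality and unitality force $1=P_r(1)=S_rS_r^*$, a contradiction. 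There is also a type mismatch: the dual of $\hat\pi_t$ is Powers's generalized boundary representation, a map from $B(H_0)$ to $B(K)$, not a map of the corner $(1-S_tS_t^*)B(H_0)(1-S_tS_t^*)$ into itself, so ``$\Phi_t$ encoded by $\hat\pi_t$'' is not well defined. The correct route in (ii) is to use $\omega$ (through the hypotheses on the $\hat\pi_t$) to build the candidate resolvent by the formula in (i), prove it is the resolvent of a $\sigma$-weakly continuous unital CP semigroup, and then verify the intertwining relation; the compatibility-of-$\hat\pi_t$ step you defer in one sentence is precisely where the bulk of Powers's work lies, so as it stands the plan asserts rather than proves both halves of the theorem.
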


When $K=\C$, a boundary weight map $\omega$ is identified with the boundary weight $\omega(1_K)$, which is  
a normal semifinite weight of $B(H_0)$ satisfying 
\begin{equation*}
\omega(1_K)(1_{H_0}-\Lambda(1_K))=1.
\end{equation*}
The condition in (ii) is automatically satisfied for such a weight. 
In particular, any function $f$ with 
\begin{equation*}\int_0^\infty |f(x)|^2(1-e^{-x})dx=1,\end{equation*} gives rise to a $CP$-flow. 
Already this case provides uncountably many type II$_0$ $E_0$-semigroups. 
More precisely, Powers showed that the resulting $E_0$-semigroup is of type II$_0$ unless $f\in L^2(0,\infty)$, 
and that for any such functions $f_1,f_2$ as above, the resulting $E_0$-semigroups are 
cocycle conjugate if and only if $c_1f_1+c_2f_2 \in L^2(0,\infty)$ for some $c_1,c_2\in \C\setminus \{0\}$.

See \cite{APP2006},\cite{J2010},\cite{JM2012},\cite{JMP2012},\cite{MP2009} for recent progress in this approach. 
There are several $E_0$-semigroups of type II whose gauge groups are known. 

It is desirable to unify the two approaches presented in this section, and we propose two problems, just to start with. 
\begin{itemize}
\item[(i)] Compute Tsirelson's random sets invariant for an $E_0$-semigroup of type II$_0$ arising from a $CP$-flow  
in terms of its boundary weight map. 
\item[(ii)] Give a description of the boundary representation for an $E_0$-semigroup of type II arising from an HCPMC.
\end{itemize} 
\section{Beyond Arveson: Type III case}
Powers \cite{P87} constructed the first example of a type III $E_0$-semigroup using the CAR algebra in 1987, 
just after the theory of $E_0$-semigroups was initiated. 
It had been the only example of a non-type I $E_0$-semigroup for a while. 
Much later, Tsirelson \cite{T2003-1} constructed the first continuous family of type III product systems 
using HCPMCs coming from off-white noises. 

\subsection{CAR construction}
Recall that the CAR flows are conjugate to the CCR flows, which are necessarily of type I, and they are constructed in 
the vacuum representation. 
To construct the first type III example, Powers used a quasi-free representation of the CAR algebra instead of the vacuum representation. 

Let $G:=L^2((0,\infty),\C^N)$. 
We denote by $\fA$ the CAR algebra over the test function space, 
which is the universal $C^*$-algebra generated by 
$\{a(f); f \in G\}$, depending linearly on $f$, and satisfying the CAR relations:  
\begin{eqnarray*}
a(f)a(g) +a(g)a(f)&  = & 0, \\
a(f)a(g)^* +a(g)^*a(f) & = & \langle f,g\rangle1. 
\end{eqnarray*} 

Let $S=\{S_t\}_{t\geq 0}$ be the shift semigroup acting on $G$. 
Since the CAR relation involves only the inner product, there exists a continuous semigroup $\gamma$ 
of unital endomorphisms of $\fA$ given by $\gamma_t(a(f))=a(S_tf)$. 
If $\pi$ is a type I factor representation of $\fA$ such that $\pi\circ \gamma_t$ is quasi-equivalent to $\pi$ for all $t\geq 0$, 
then $\gamma$ extends to an $E_0$-semigroup acting on $\pi(\fA)''$. 
The vacuum representation is an example of such a representation, giving the CAR/CCR flow of rank $N$.

A quasi-free state $\omega_A$ on $\fA$ associated with a positive contraction $A \in B(G)$  
is a unique state determined by the formula 
\begin{equation*}\omega_A(a(f_n) \cdots a(f_1)a(g_1)^* \cdots a(g_m)^* ) = \delta_{n,m} \det (\langle Af_i, g_j\rangle) .\end{equation*}   
If a positive contraction $A$ satisfies the condition 
\begin{equation}\label{extension}
\tr(A-A^2)<\infty, \quad S_t^*AS_t=A,\quad \forall t\geq 0,
\end{equation}
the GNS representation for $\omega_A$ has the desired property, and we can construct an $E_0$-semigroup. 

To present the positive contraction Powers constructed, we need to introduce Toeplitz operators.  
We regard $G$ as a closed subspace of $\tG:=L^2(\R,\C^N)$, and we denote by $P_+$ the projection from $\tG$ onto $G$. 
For $\Phi\in L^\infty(\R)\otimes M_N(\C)$, we define the corresponding Fourier multiplier $C_\Phi\in B(\tG)$ by 
\begin{equation*}
\widehat{(C_\Phi f)}(p)=\Phi(p)\widehat{f}(p).
\end{equation*}
Then the Toeplitz operator $T_\Phi\in B(G)$ with a symbol $\Phi$ is defined by $T_\Phi f=P_+C_\Phi f$, $f\in G.$

Powers \cite{P87} came up with a mysterious symbol giving a type III example. 

\begin{theorem} Let $N=2$, and let 
$$\Phi(p)=\frac{1}{2}\left(
\begin{array}{cc}
1 &e^{i\theta(p)}  \\
e^{-i\theta(p)} &1 
\end{array}
\right),\quad \theta(p)=(1+p^2)^{-1/5}.
$$
Then $A=T_\Phi$ satisfies the condition (\ref{extension}), and the quasi-free representation for $A=T_\Phi$ gives a type III $E_0$-semigroup.  
\end{theorem}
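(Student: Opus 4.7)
The plan is to split the verification of (\ref{extension}) (which is essentially computational) from the type III statement (which is the analytic heart of the theorem).

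First, a direct matrix multiplication gives $\Phi(p)^2=\Phi(p)$, so $C_\Phi$ is an orthogonal projection on $\tG$. Since $\Phi$ depends only on the frequency variable $p$, the Fourier multiplier $C_\Phi$ commutes with the bilateral shift $\tilde S_t$ on $\tG$, and because $\tilde S_t G\subset G$ for $t\ge 0$, for $f,g\in G$ one computes
\begin{equation*}
\langle g, S_t^*T_\Phi S_t f\rangle=\langle \tilde S_t g,\, C_\Phi\tilde S_t f\rangle_{\tG}=\langle g,C_\Phi f\rangle_{\tG}=\langle g,T_\Phi f\rangle,
\end{equation*}
so $S_t^*T_\Phi S_t=T_\Phi$. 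For the trace-class condition, set $P_-=1-P_+$ and use $C_\Phi^2=C_\Phi$ to obtain
\begin{equation*}
T_\Phi-T_\Phi^2=P_+C_\Phi P_- C_\Phi P_+=(P_-C_\Phi P_+)^*(P_-C_\Phi P_+),
\end{equation*}
so $\tr(A-A^2)=\|P_-C_\Phi P_+\|_{\mathrm{HS}}^2$. The constant diagonal piece $\tfrac12 I$ of $\Phi$ commutes with $P_+$ and contributes nothing; what remains is a $2\times 2$ block of scalar Wiener--Hopf-type operators with Fourier-multiplier symbols $\tfrac12 e^{\pm i\theta(p)}$. A kernel computation gives the Hilbert--Schmidt norm squared as $\int_0^\infty u\,|\check\phi(-u)|^2\,du$ with $\phi=e^{\pm i\theta}-1$, which by Plancherel is comparable to $\|\phi\|_{\dot H^{1/2}(\R)}^2$, and the pointwise inequality $|e^{i\theta(p)}-e^{i\theta(q)}|\le|\theta(p)-\theta(q)|$ dominates this by a constant times $\|\theta\|_{\dot H^{1/2}(\R)}^2$. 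Finiteness of the latter follows from the explicit Fourier transform: $\hat\theta$ can be written in terms of a modified Bessel function, yielding $|\hat\theta(\xi)|^2$ with integrable behavior at the origin (weaker than $|\xi|^{-1}$) and exponential decay at infinity, so $\int|\xi||\hat\theta(\xi)|^2\,d\xi<\infty$.

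Given (\ref{extension}), Araki's criterion for quasi-equivalence of quasi-free CAR representations extends $\gamma_t$ to a normal unital endomorphism $\alpha_t$ of the factor $M=\pi_A(\fA)''$ with the required continuity, producing the $E_0$-semigroup $\alpha$. The main obstacle is showing that $\alpha$ is of type III, i.e.\ that $\cU_{\cE_\alpha}=\emptyset$. Following Powers, I would argue by contradiction: a unit would provide a strongly continuous semigroup $\{U_t\}\subset M$ of isometries with $U_t\pi_A(a(f))=\pi_A(a(S_t f))U_t$ for every $f\in G$. In the quasi-free GNS picture, $U_t\Omega$ admits an explicit description as a Bogoliubov implementer associated with the isometry $S_t$, and the intertwining condition pins its norm down to a Fredholm determinant whose logarithm has a small-$t$ asymptotic expansion controlled by certain trace integrals of $\Phi$ restricted to the ``boundary'' subspace on which $\tilde S_t^*P_+\tilde S_t$ and $P_+$ disagree. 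The key calculation, due to Powers, shows that for $\theta(p)=(1+p^2)^{-1/5}$ this leading term diverges: the slow decay $\theta(p)\sim|p|^{-2/5}$ (in particular $\theta\notin L^2$) injects enough low-frequency weight to overwhelm any candidate implementer, while the regularity $\theta\in\dot H^{1/2}$ is precisely what makes the defect in (\ref{extension}) finite in the first place. Carrying out this delicate asymptotic balance — so that the same exponent $-1/5$ simultaneously guarantees (\ref{extension}) and obstructs all units — is the technical core of the theorem and the principal difficulty of the proof.
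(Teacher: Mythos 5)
Your first half is fine: the identities $\Phi^2=\Phi$, $S_t^*T_\Phi S_t=T_\Phi$, the Hankel factorization $T_\Phi-T_\Phi^2=(P_-C_\Phi P_+)^*(P_-C_\Phi P_+)$, and the reduction of $\tr(A-A^2)<\infty$ to $\int_\R|\xi|\,|\hat\theta(\xi)|^2\,d\xi<\infty$ via the Gagliardo form of the $\dot H^{1/2}$ seminorm are all correct, and the Bessel-kernel asymptotics $\hat\theta(\xi)\sim c|\xi|^{-3/5}$ near $0$ with exponential decay at infinity do give finiteness (consistent with the fact, recorded in the paper, that $T_{\Phi_\nu}$ satisfies (\ref{extension}) for every $\nu>0$). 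Only your parenthetical about $|\hat\theta(\xi)|^2$ being integrable at the origin is off as literally stated ($|\hat\theta|^2\sim|\xi|^{-6/5}$ is not integrable there; it is $|\xi|\,|\hat\theta(\xi)|^2\sim|\xi|^{-1/5}$ that is), but this does not affect the conclusion.

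The genuine gap is the type III assertion, which is the actual content of the theorem, and your proposal does not prove it. You argue by contradiction from a unit, assert that ``$U_t\Omega$ admits an explicit description as a Bogoliubov implementer associated with $S_t$'' whose norm is ``pinned down'' by a Fredholm determinant, and then defer the decisive divergence to ``the key calculation, due to Powers,'' explicitly leaving that balance uncarried out. Two problems: first, the reduction itself is unjustified --- a unit of $\cE_\alpha$ is merely a strongly continuous semigroup of operators intertwining $\id$ and $\alpha_t$, and nothing forces it to be of quasi-free/Bogoliubov form, so the claimed determinant formula for its norm is not available without argument; second, even granting the reduction, the small-$t$ asymptotic and its divergence for $\theta(p)=(1+p^2)^{-1/5}$ (your proposed dichotomy ``$\theta\notin L^2$'') is precisely what must be proved, and it is nowhere established. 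Note also that this survey contains no proof of the theorem: it cites Powers \cite{P87}, whose non-spatiality argument is of a different nature, and it points out that the modern way to handle the whole family $\theta_\nu$ (type I for $\nu>1/4$, type III for $0<\nu\le 1/4$, with the non-cocycle-conjugacy statements) goes through the Araki--Woods type I factorizations \cite{AW} of the local von Neumann algebras $\cA^E(O)$ of the product system, as in \cite{IS2010} --- a mechanism quite different from the determinant asymptotics you sketch. As it stands, your proposal establishes condition (\ref{extension}) but only restates, rather than proves, that $\cU_{\cE_\alpha}=\emptyset$.
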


Arveson \cite[Section 13.3]{Arv2003-1} determined the most general form of a positive contraction $A\in B(G)$ satisfying 
the condition (\ref{extension}), and showed that such an operator must be a Toeplitz operator $T_\Phi$ with a symbol $\Phi$ 
satisfying a certain condition. 
We call the resulting $E_0$-semigroup the Toeplitz CAR flow arising from the symbol $\Phi$. 

Let $\Phi_\nu$ be the matrix valued function given by the same formula as the Powers symbol except for $\theta(p)=(1+p^2)^{-\nu}$. 
Then $T_{\Phi_\nu}$ satisfies the condition (\ref{extension}) for all $\nu>0$, and we denote by $\alpha^\nu$ the resulting $E_0$-semigroup. 
Recently Srinivasan and the author \cite{IS2010} showed the following. 

\begin{theorem} Let the notation be as above. 
\begin{itemize}
\item [(i)] If $\nu>1/4$, then $\alpha^\nu$ is of type I$_2$. 
\item [(ii)] If $0<\nu\leq 1/4$, then $\alpha^\nu$ is of type III. 
\item [(iii)] If $0<\nu_1<\nu_2\leq 1/4$, then $\alpha^{\nu_1}$ and $\alpha^{\nu_2}$ are not cocycle conjugate. 
\end{itemize}
\end{theorem}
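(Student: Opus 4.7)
The plan is to work in the quasi-free GNS representation of $\fA$ for $A=T_{\Phi_\nu}$ and to reduce each part of the theorem to an analytic property of the symbol $\Phi_\nu$, using $\theta_\nu(p)=(1+p^2)^{-\nu}$ throughout. A direct computation shows $\theta_\nu\in L^2(\R)$ if and only if $\nu>1/4$, and this dichotomy drives the whole argument.

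For (i), when $\nu>1/4$, I would establish that $\omega_{T_{\Phi_\nu}}$ is quasi-equivalent to the Fock state $\omega_{P_0}$ associated with a constant rank-$1$ diagonal projection $P_0$ on $G$. By the Shale--Stinespring criterion (or Powers--St\o rmer inequality), this reduces to $T_{\Phi_\nu}^{1/2}-P_0^{1/2}$ being Hilbert--Schmidt; an explicit estimate bounds its Hilbert--Schmidt norm by a multiple of $\|\theta_\nu\|_{L^2}$, which is finite in this range. The ensuing intertwining unitary turns $\alpha^\nu$ into a cocycle perturbation of the CAR flow of rank $2$, which, by Powers--Robinson, is conjugate to the CCR flow of rank $2$. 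Hence $\alpha^\nu$ is of type I$_2$.

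For (ii), I would show $\cU_{\cE_{\alpha^\nu}}=\emptyset$ whenever $0<\nu\le 1/4$. Any unit produces a weakly continuous semigroup $\{V_t\}$ of isometries in $\pi_A(\fA)''$ intertwining the shift endomorphism, and its derivative at $t=0$ applied to the cyclic vector $\Omega_A$ gives a one-particle datum subject to an operator equation built from the CAR relations and the quasi-free form of $\omega_A$. Unwinding this equation, solvability turns out to be equivalent to the square integrability of a function explicitly controlled by $\theta_\nu$; since $\theta_\nu\notin L^2(\R)$ in this range, no such datum exists, and therefore no unit exists.

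For (iii), within the type III regime every unit-based invariant vanishes, so a new invariant is required. My plan is to define a numerical invariant $\kappa(E)$ for a product system $E$ that measures the asymptotic growth of a canonical ``defect'' quantity as the scale increases. For the Toeplitz model, $\kappa(\cE_{\alpha^\nu})$ should be extracted from the leading behavior of $\mathrm{tr}\bigl(q_t\,T_{\Phi_\nu}(1-T_{\Phi_\nu})\,q_t\bigr)$ as $t\to\infty$, where $q_t$ is the orthogonal projection onto $L^2((0,t),\C^2)$; a Fourier-analytic computation should express this as a strictly monotone function of $\nu\in(0,1/4]$, so that distinct $\nu$ yield distinct values of $\kappa$. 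The chief difficulty, and the main obstacle of the whole theorem, is verifying that $\kappa$ is genuinely an isomorphism invariant of the abstract product system rather than an artefact of the Toeplitz realization. This requires an intrinsic characterization of $\kappa$ in structural terms (for instance, via a natural filtration or via ``almost decomposable'' vectors in $\cE_{\alpha^\nu}(t)$ for small $t$), and it is this step that lies at the heart of the Izumi--Srinivasan contribution; parts (i) and (ii) are, by contrast, largely technical consequences of classical Bogoliubov/Shale--Stinespring analysis.
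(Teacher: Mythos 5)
Your outline fails at three load-bearing points, and the failures are related: you try to extract everything from the \emph{global} quasi-free data, whereas the actual proof (Izumi--Srinivasan \cite{IS2010}, as sketched in this survey) works with \emph{local} structures. In (i), the claimed global quasi-equivalence is false: a necessary condition for $T_{\Phi_\nu}^{1/2}-P_0^{1/2}$ to be Hilbert--Schmidt is that $T_{\Phi_\nu}-P_0=T_{\Phi_\nu-P_0}$ be Hilbert--Schmidt, and a Toeplitz operator with a nonzero symbol is never Hilbert--Schmidt (its kernel is of convolution type $k(x-y)$ on the quarter plane, so $\int_0^\infty\!\!\int_0^\infty|k(x-y)|^2dx\,dy=\infty$ whenever $k\neq0$), regardless of whether $\theta_\nu\in L^2$. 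The finiteness of $\|\theta_\nu\|_{L^2}$ only becomes relevant after compression to finite intervals; accordingly the actual type I criterion is local: one produces an isomorphism of $B(\cE_{\alpha^\nu}(1))$ onto $B(E^{\C^2}(1))$ matching the half-interval algebras $\cA^E((0,t))$ and invokes Liebscher's theorem (quoted at the end of Section 4) to conclude that the product systems are isomorphic, i.e.\ cocycle conjugacy with the rank-two CCR flow --- not the conjugacy your unitary would give. In (ii), your infinitesimal argument presupposes that a unit can be differentiated at $t=0$; units are only measurable multiplicative sections and no such regularity is available, so the ``operator equation for a one-particle datum'' is not justified. The actual route is structural: one first proves that Toeplitz CAR flows are either of type I or of type III (so spatiality need only be ruled out by ruling out type I), and then excludes type I for $0<\nu\le 1/4$ by comparing local factorization invariants with those of exponential product systems.

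For (iii) you concede the decisive point yourself: you have no proof that your quantity $\kappa$, defined through $\tr\bigl(q_tT_{\Phi_\nu}(1-T_{\Phi_\nu})q_t\bigr)$ in the Toeplitz realization, is an invariant of the abstract product system, and cocycle perturbations give you no control over it. The missing idea is to use invariants that are intrinsic by construction: the local von Neumann algebras $\cA^E(J)$ for intervals $J\subset[0,1]$ and, crucially, the algebras $\cA^E(O)$ generated over open sets $O$ that are infinite unions of intervals. Any isomorphism of product systems carries these algebras onto the corresponding ones, so their isomorphism classes are automatically cocycle conjugacy invariants; for the Toeplitz CAR flows they are analyzed through the Araki--Woods theory of complete Boolean algebras of type I factors \cite{AW} (ITPFI-type factors), and their isomorphism class varies with $\nu$ in the region $0<\nu\le 1/4$, which is exactly how $\alpha^{\nu_1}$ and $\alpha^{\nu_2}$ are distinguished. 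Without this (or some substitute proof of invariance), parts (ii) and (iii) of your proposal remain open, and part (i) as written is not repairable along the global Hilbert--Schmidt line.
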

To distinguish $\alpha^\nu$ in the type III region $0<\nu\leq 1/4$, we used the type I factorizations of Araki-Woods \cite{AW} 
arising from local von Neumann algebras for the product systems (see the next subsection). 
In general Toeplitz CAR flows are either of type I or of type III. 


\subsection{Off-white noises and generalized CCR flows}
Let $\{B_t\}_{t\in \R}$ be the (two-sided) Brownian motion, and let $X(t)$ be the formal derivative $\frac{dB(t)}{dt}$. 
Then $\{X(t)\}_{t\in \R}$ is a stationary Gaussian generalized (i.e. distribution valued) random process with correlation function 
$E(X(s)X(t))=\delta(s-t)$. 
There is no relation between the past and the future at all. 
Let $\cF_{s,t}$ be the $\sigma$-field generated by $\inpr{X}{f}$ with test functions $f$ supported in $(s,t)$. 
Then we get white noise. 

An off-white noise is an HCPMC, not really a noise, constructed in the same way by replacing $X(t)$ with a stationary 
Gaussian generalized random process $\xi(t)$ having a slight correlation of the past and the future. 
The correlation function $C(s-t)=E(\xi(s)\xi(t))$ is now a positive definite distribution, whose Fourier transform 
$\hat{C}$ is a measure. 
Tsirelson \cite{T2002},\cite{T2003-2} showed that if $\hat{C}$ has a density $\sigma(\lambda)$ with respect to 
the Lebesgue measure $d\lambda$, and $\sigma$ satisfies 
\begin{equation*}
\int_{\R^2}\frac{|\log \sigma(\lambda_1)-\log \sigma(\lambda_2)|^2}{|\lambda_1-\lambda_2|^2}d\lambda_1d\lambda_2<\infty,
\end{equation*}  
then we get an HCPMC, which is called an off-white noise. 
The function $\sigma(\lambda)$ is called the spectral density function of the off-white noise, and all information 
about the off-white noise is encoded in it. 
In the case of white noise, it is a constant function.

Tsirelson \cite{T2003-1} showed the following. 

\begin{theorem} For $r>0$, let $\sigma_r$ be a smooth positive even function with 
$\sigma_r(\lambda)= \log^{-r}|\lambda|$ for large $|\lambda|$. 
Then $\sigma_r$ is a spectral density function of an off-white noise, and the family 
$\{\sigma_r(\lambda)\}_{r>0}$ gives rise to mutually non-isomorphic type III product systems.
\end{theorem}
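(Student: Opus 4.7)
The plan is to follow Tsirelson's HCPMC framework in four steps: verify that each $\sigma_r$ really is a spectral density of an off-white noise, invoke the HCPMC-to-product-system construction to produce $E^{(r)}$, show $E^{(r)}$ is of type III by ruling out units, and finally distinguish the $E^{(r)}$ for different $r$ via an invariant sensitive to the asymptotics of $\sigma_r$.

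For the integrability condition, observe that for large $|\lambda|$ we have $\log\sigma_r(\lambda)=-r\log\log|\lambda|$. The double integral
\begin{equation*}
I(r)=\int_{\R^2}\frac{|\log\sigma_r(\lambda_1)-\log\sigma_r(\lambda_2)|^2}{|\lambda_1-\lambda_2|^2}\,d\lambda_1 d\lambda_2
\end{equation*}
splits into a bounded region, where $\sigma_r$ is smooth and strictly positive so the integrand is bounded, and the region where $|\lambda_1|,|\lambda_2|$ are both large. On the latter one uses the mean-value estimate $|\log\log|\lambda_1|-\log\log|\lambda_2||\le C|\lambda_1-\lambda_2|/(|\lambda|\log|\lambda|)$ for nearby points (contributing a Dini-type convergent integral) together with the crude bound $|\log\log|\lambda_i||\le C\log\log|\lambda|$ when the points are well separated, which against $|\lambda_1-\lambda_2|^{-2}$ again converges. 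So $\sigma_r$ defines an off-white noise, and Tsirelson's construction produces the HCPMC from the stationary generalized Gaussian process $\xi$ with covariance $\widehat{\sigma_r}$ and the $\sigma$-fields $\cF_{s,t}$ generated by $\langle \xi,f\rangle$ for $f$ supported in $(s,t)$; the resulting product system $E^{(r)}$ has fiber $L^2(\cF_{0,t})$ with multiplication coming from the equivalent measure making adjacent $\sigma$-fields independent.

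To show $E^{(r)}$ is of type III, I would argue that a unit in $E^{(r)}$ would amount to a continuous family of ``coherent product states'' for the underlying HCPMC, which in turn would exhibit a nontrivial classical (additive L\'evy-type) sub-HCPMC inside $\xi$. Since $\sigma_r$ has no constant component and decays at infinity, the Gaussian process $\xi$ admits no additive stationary classical subprocess whose $\sigma$-fields split independently along intervals, so $\cU_{E^{(r)}}=\emptyset$. This is a direct adaptation of Tsirelson's argument identifying the classical part of an off-white HCPMC with the white-noise component of its spectral measure.

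The nontrivial step is the separation of the $E^{(r)}$. The idea is to attach to the product system a quantitative invariant measuring, for each $t>0$ and each partition $0=t_0<t_1<\cdots<t_n=t$, the ``distance'' between the actual fiber $E^{(r)}(t)=L^2(\cF_{0,t})$ and the independent tensor product $\bigotimes_i L^2(\cF_{t_{i-1},t_i})$ under refinement; equivalently, one can use a suitable relative entropy or the Hilbert--Schmidt deviation of the Radon--Nikodym derivative of the original measure against its product surrogate, along finer and finer partitions. Since this depends only on the product system structure via the measure class, it is an isomorphism invariant, and for Gaussian HCPMCs it can be computed asymptotically in terms of $\sigma_r$ and is strictly monotone in $r$. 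The main obstacle is precisely this last point: producing an invariant that is genuinely an invariant of the product system (not only of the concrete HCPMC model) and is explicit enough that its value can be read off from $\sigma_r$ and shown to separate distinct $r>0$.
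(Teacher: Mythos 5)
There is a genuine gap, and it sits exactly where you locate it yourself: the separation of the family $\{E^{(r)}\}_{r>0}$. Your proposed invariant (a relative-entropy or Hilbert--Schmidt deviation of the measure from its independent-product surrogate along refining partitions) is never shown to be a function of the product system rather than of the concrete Gaussian HCPMC model, and you concede that this is the main obstacle; as written, the argument therefore does not prove the non-isomorphism statement, which is the substantive content of the theorem. The route actually used (Tsirelson, and as indicated at the end of Section 4 of this article) is different and avoids this problem from the start: to a product system $E$ one attaches the local von Neumann algebras $\cA^E(J)\subset B(E(1))$ for intervals $J\subset[0,1]$ and, for an arbitrary open set $O\subset[0,1]$ (for instance the complement of a Cantor-type compact set), the algebra $\cA^E(O)$ generated by $\{\cA^E(J)\}_{J\subset O}$. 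This system is an isomorphism invariant of $E$ by construction, and for the Gaussian product systems coming from off-white noises the type and structure of $\cA^E(O)$ can be computed, via the Araki--Woods theory of type I factorizations, directly from the spectral density; the asymptotics $\sigma_r(\lambda)=\log^{-r}|\lambda|$ enter precisely here and separate distinct values of $r$. Replacing your step four by this computation (or by Tsirelson's equivalent random-set/measure-class invariants) is what is needed; without it the theorem is not established.

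Two smaller points. First, your verification of the off-white integrability condition is essentially fine in outline, though the near-diagonal estimate should be stated uniformly (the mean-value bound $|\log\log|\lambda_1|-\log\log|\lambda_2||\leq C|\lambda_1-\lambda_2|/(|\lambda|\log|\lambda|)$ only holds when $|\lambda_1-\lambda_2|$ is small compared with $|\lambda_1|$, so the splitting of the large-frequency region has to be done with some care). Second, the type III step is asserted rather than proved: since an off-white noise is an HCPMC and not a noise, the dictionary ``units $\leftrightarrow$ classical part'' that you invoke is not available off the shelf, and the absence of units for these Gaussian examples requires its own argument (in Tsirelson's treatment it again comes out of the analysis of the measure classes, not from the classical-part theorem for noises). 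So both the type III claim and, above all, the mutual non-isomorphism need the invariants of local algebras or their probabilistic counterparts, which your proposal does not supply.
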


Tsirelson's construction has many faces. 
Bhat-Srinivasan \cite{BSr2005} systematically investigated the product systems arising from so-called sum systems 
by a purely functional analytic method, which recaptures Tsirelson's construction as a special case. 

Srinivasan and the author \cite{IS2008} showed that the $E_0$-semigroups corresponding to the product systems arising from sum systems 
are generalized CCR flows. 
Let $G$ be a real Hilbert space, and let $S=\{S_t\}_{t\geq 0}$ and $T=\{T_t\}_{t\geq 0}$ be $C_0$-semigroups acting on $G$ 
such that $T_t^*S_t=1_G$ and $S_t-T_t$ is a Hilbert-Schmidt operator for any $t\geq 0$. 
Then we can construct an $E_0$-semigroup $\alpha$ acting on $B(e^{G\otimes \C})$ by 
\begin{equation*}\alpha_t(W(f+ig))=W(S_tf+iT_tg),\quad f,g\in G.\end{equation*}
$E_0$-semigroups constructed in this way are called generalized CCR flows. 
Generalized CCR flows are either of type I or of type III (see \cite{BSr2005},\cite{I2009}). 
In the case of off-white noises, we can choose $G=L^2((0,\infty),\R)$ and $S$ to be the shift semigroup. 
The author \cite{I2007} gave the precise relationship between $T$ and the spectral density function $\sigma$ in this case. 
Using this correspondence, Srinivasan and the author \cite{IS2008} showed that there exists a continuous family of off-white noises 
whose spectral density functions converge to 1 at infinity, such that the family still gives mutually non-isomorphic type III product systems.

There is a certain similarity between the Toeplitz CAR flows and generalized CCR flows 
(for example, they are never of type II), and it is desirable to clarify their relationship. 
As a first step, we propose the following problem: 
determine the gauge groups of the Toeplitz CAR flows and generalized CCR flows.

Before ending this final section, we emphasize the importance of local von Neumann algebras associated with product systems 
in the results discussed in this section. 
Let $E$ be a product system. 
For $0 <s< t< 1$, we denote by $U_{s,t,1}$ the unitary map determined by 
\begin{equation*}U_{s,t,1}:E(s)\otimes E(t-s)\otimes E(1-t)\ni x\otimes y\otimes z\rightarrow xyz\in E(1).\end{equation*} 
For an interval $J=(s,t)\subset [0,1]$, 
we define the von Neumann algebra  $\cA^E(J)\subset B(E(1))$ associated with $J$ by 
\begin{equation*}\cA^E(J)=U_{s,t,1}(1_{E(s)}\otimes B(E(t-s))\otimes 1_{E(1-t)})U_{s,t,1}^*.\end{equation*}
We apply a similar definition in the case with $s=0$ or $t=1$. 
Then $\cA^E(J)$ is a type I subfactor of $B(E(1))$, and when two intervals $J_1$ and $J_2$ are disjoint, 
the corresponding algebras $\cA^E(J_1)$ and $\cA^E(J_2)$ commute with each other. 
For an open subset $O\subset [0,1]$, we define $\cA^E(O)$ to be the von Neumann algebra generated by $\cup_{J\subset O}\cA^E(J)$, 
which may not be of type I. 
The system of von Neumann algebras $\{\cA^E(J)\}_{J\subset [0,1]}$ is an important isomorphism invariant of the product system $E$, 
and it is an analogue of local observable algebras in algebraic quantum field theory.  
For example, Liebscher \cite{L2009} showed the following useful theorem, which Srinivasan and the author used in \cite{IS2010} 
to obtain a type I criterion. 

\begin{theorem} Let $E$ and $F$ be product systems. 
If there exists an isomorphism $\theta$ from $B(E(1))$ onto $B(F(1))$ satisfying $\theta(\cA^E((0,t)))=\cA^F((0,t))$ for all 
$0<t <1$, then $E$ and $F$ are isomorphic. 
\end{theorem}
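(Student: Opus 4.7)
The plan is to apply Wigner's theorem to $\theta$ and then exploit the tensor factorizations of the implementing unitary induced by the local algebras to extract fiber-wise unitaries $\theta_t:E(t)\to F(t)$ which, after a judicious normalization, intertwine the multiplications of $E$ and $F$. Explicitly, Wigner's theorem furnishes a unitary $W:E(1)\to F(1)$, unique up to scalar, with $\theta=\Ad W$. For each $t\in(0,1)$, the multiplications provide canonical unitary identifications $E(1)=E(t)\otimes E(1-t)$ and $F(1)=F(t)\otimes F(1-t)$, under which $\cA^E((0,t))$ becomes $B(E(t))\otimes 1$, and likewise for $F$. Since $\theta$ preserves $\cA^E((0,t))$ and hence, by taking commutants in $B(E(1))$, also $\cA^E((t,1))$, the unitary $W$ intertwines both tensor factorizations; a standard argument yields unitaries $U_t:E(t)\to F(t)$ and $V_t:E(1-t)\to F(1-t)$, unique up to the rescaling $(U_t,V_t)\mapsto(\lambda U_t,\lambda^{-1}V_t)$, such that $W=U_t\otimes V_t$.

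For $0<s<t<1$, associativity gives the three-fold decomposition $E(1)=E(s)\otimes E(t-s)\otimes E(1-t)$, and $W$ simultaneously respects the three commuting type~I subfactors $\cA^E((0,s))$, $\cA^E((s,t))$, $\cA^E((t,1))$. Consequently $W=W_1\otimes W_2\otimes W_3$, with unitaries on the corresponding fibers (unique up to scalars of product one). Matching with the two-fold factorizations above yields, after absorbing scalars,
\[
U_t(xy)=U_s(x)\,W_2(y),\qquad x\in E(s),\ y\in E(t-s),
\]
where $W_2:E(t-s)\to F(t-s)$ is the unitary implementing the restriction of $\theta$ to the middle subfactor $\cA^E((s,t))$.

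The crux is to replace $W_2$ by $U_{t-s}$, converting the displayed identity into the product-system multiplicativity $U_t(xy)=U_s(x)U_{t-s}(y)$. This is where the argument becomes delicate: a priori the middle unitary $W_2^{(s,s+r)}$ depends on the position $s$ as well as on the length $r$, whereas $U_r$ corresponds to the specific sub-interval $(0,r)$. Using the Borel structure in condition~(iii), one selects a Borel-measurable family $t\mapsto(U_t,V_t)$. Iterating the three-fold decomposition produces a cocycle-type identity (valid up to scalars)
\[
W_2^{(s,s+r+u)}=W_2^{(s,s+r)}\otimes W_2^{(s+r,s+r+u)},
\]
and a coboundary-vanishing argument---combining measurability with the density of products of fibers---permits a simultaneous renormalization of the $U_t$'s such that $W_2^{(s,s+r)}=U_r$ for all admissible $s,r$, yielding the required multiplicativity on $(0,1)$.

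Finally, for $t\geq 1$, pick $n\in\N$ with $t/n<1$ and set $\theta_t(x_1\cdots x_n)=U_{t/n}(x_1)\cdots U_{t/n}(x_n)$ on the dense span of products $x_i\in E(t/n)$; multiplicativity on $(0,1)$ ensures well-definedness and independence of $n$, producing a unitary. The Borel structure in condition~(iii) together with the measurable choices above guarantees that $\{\theta_t\}_{t>0}$ is measurable, so the family $\{\theta_t\}$ is the sought isomorphism of product systems. The hard part of the whole argument is the cocycle-vanishing step in the third paragraph, which requires substantial use of the fine measurable structure of the product systems and is where Liebscher's machinery is genuinely needed.
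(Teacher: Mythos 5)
Your first two paragraphs are fine, and the reduction you arrive at is the right one to worry about; but the step you yourself call the crux contains a genuine gap, and it is not a technicality that measurability can repair. The obstruction is this: both $\Ad U_r$ and $\Ad W_2^{(s,s+r)}$ are canonically determined by $\theta$ (the first is $\theta$ restricted to $\cA^E((0,r))$, the second is $\theta$ restricted to the middle algebra $\cA^E((s,s+r))$, each transported to $B(E(r))$ via the product-system identifications), so no rescaling of the $U_t$'s by phases can ever force $W_2^{(s,s+r)}=U_r$ unless $\Ad W_2^{(s,s+r)}=\Ad U_r$ already holds -- and that position-independence is false in general for the given $W$. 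Concretely, take $E=F$ the rank-one exponential system, so $E(1)=e^{L^2(0,1)}$, and let $W=\Gamma(m_\phi)$ be the second quantization of multiplication by a unimodular function $\phi$ on $(0,1)$. Then $\Ad W$ preserves every $\cA^E((0,t))$, one finds $U_r=\Gamma(m_{\phi 1_{(0,r)}})$ up to phase, while the middle unitary at position $s$ is $W_2^{(s,s+r)}=\Gamma(m_{\phi(s+\cdot)1_{(0,r)}})$ up to phase (the identification of the middle slot with $E(r)$ goes through the shift), and these implement different automorphisms of $B(E(r))$ whenever $\phi(s+\cdot)\overline{\phi(\cdot)}$ is nonconstant, e.g.\ $\phi(x)=e^{ix^2}$. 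Your cocycle identity $W_2^{(s,s+r+u)}=W_2^{(s,s+r)}\otimes W_2^{(s+r,s+r+u)}$ is satisfied in this example with genuinely position-dependent middle unitaries, so it cannot force position-independence; in effect your argument would prove that every filtration-preserving $\theta$ is implemented by a multiplicative (up to phase) family, which the example refutes. The theorem itself survives because it only asserts the existence of \emph{some} isomorphism of $E$ and $F$, not that the given $W$ factorizes multiplicatively; producing that isomorphism when $W$ does not factorize is precisely the hard content.

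For comparison: the paper does not prove this statement at all -- it quotes it from Liebscher \cite{L2009}, where it is a substantial theorem requiring machinery well beyond the Wigner-plus-tensor-factorization bookkeeping in your outline (which only yields the two-parameter family of "local" isomorphisms $\Ad W_2^{(s,s+r)}$ and their consistency relations). Any correct argument has to explain how to pass from that family, which genuinely depends on position, to a single multiplicative family of unitaries $E(t)\rightarrow F(t)$, and that is where the real work lies.
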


The system of von Neumann algebras $\{\cA^E(I)\}_{I\subset [0,1]}$ and the isomorphism classes of von Neumann algebras $\cA^E(O)$    
are employed as isomorphism invariants of $E$ in \cite{IS2008},\cite{IS2010},\cite{T2003-1} to differentiate continuous families of 
product systems.


\section*{Acknowledgements}
The author would like to thank Alexis Alevras, Kenneth Davidson, Daniel Markiewicz, and 
Gilles Pisier for useful comments. 
This work is supported in part by the Grant-in-Aid for Scientific Research (B) 22340032, JSPS.


\newpage

\section{Appendix: Dilation theory and noncommutative Poisson boundary} 

The notion of noncommutative Poisson boundaries for normal ucp maps was introduced by the author \cite{I2002}. 
Let $N$ be a von Neumann algebra, and let $L$ be a weakly closed operator system in $N$, i.e. 
$L$ is a self-adjoint linear subspace of $N$ including the identity. 
It is known that if there exists a completely positive projection $E$ from $N$ onto $L$, 
then $L$ is a von Neumann algebra with respect to the Choi-Effros product $x\circ y=E(xy)$ 
(see \cite{CE77}). 

Let $P$ be a normal ucp map of $N$. 
We denote by $H^\infty(N,P)$ the fixed point set $\{x\in N;\; P(x)=x\}$ of $P$, 
whose members are called harmonic elements. 
Then $H^\infty(N,P)$ is a weakly closed operator system and it is the image of a completely positive projection from $N$. 
Indeed, we choose a free ultra-filter $\omega\in \beta\N\setminus \N$ and set 
\begin{equation*}E(x)=\mathrm{w}-\lim_{n\to\omega}\frac{1}{n}\sum_{k=0}^{n-1}P^k(x),\quad x\in N. \end{equation*}
Then $E$ is the desired projection. 
Although $E$ depends on the choice of $\omega$, the Choi-Effros product of $H^\infty(N,P)$ does not 
because an operator system may have at most one von Neumann algebra structure. 
Concrete realization of the von Neumann algebra structure of $H^\infty(N,P)$ 
is called the noncommutative Poisson boundary for $P$. 
This notion has proved to be particularly useful to capture structure that 
appears only after taking weak closure (see, for example, \cite{I2002}). 

When I invited Bill Arveson to Kyoto in 2004, he showed his interest in noncommutative Poisson boundaries. 
However, he did not seem to be happy about the fact that the Choi-Effros product is defined after the choice of the ultra-filter 
$\omega$ is made (even though it does not depend on $\omega$). 
When we met in 2007 at the Fields Institute, Bill told me in a brief conversation that the noncommutative Poisson boundary for $P$ 
is nothing but the fixed point algebra of the minimal dilation of $P$. 
It is so natural an idea. 
In fact, the usual measure theoretical construction of the Poisson boundary for a Markov operator 
in the commutative situation essentially uses the corresponding Markov process (see \cite{K92}), 
which is more or less the minimal dilation of the Markov operator. 
I felt a little embarrassed because such an idea had never occurred to me. 
I include a few consequences of Bill's observation here. 

Recall that the minimal dilation of $P$ consists of a von Neumann algebra $M$, a projection $p\in M$ whose central carrier is $1_M$, 
and a unital normal endomorphism $\alpha$ of $M$ such that $N=pMp$, $M$ is generated by $\cup_{n\geq 0}\alpha^n(N)$, and 
$P^n(a)=p\alpha^n(a) p$ for all $a\in N$ and $n\geq 1$. 
In this situation $\{\alpha^n(p)\}_{n=0}^\infty$ is an increasing sequence of projections whose limit is $1_M$. 
Without any modification, Theorem \ref{dilation} holds in the discrete time case too.

\begin{theorem} Let $M^\alpha=\{x\in M;\;\alpha(x)=x\}$ be the fixed point algebra of $\alpha$. 
Then the map $\theta :M^\alpha\ni x\mapsto pxp\in H^\infty(N,P)$ is a completely positive order isomorphism between the two operator systems. 
In particular, the von Neumann algebra $M^\alpha$ gives a concrete realization of the von Neumann algebra structure of $H^\infty(N,P)$. 
\end{theorem}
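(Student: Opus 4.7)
The plan is to build an inverse to $\theta$ by Cesàro averaging along an ultrafilter, in direct analogy with the projection $E$ used to define the Choi-Effros product on $H^\infty(N,P)$.

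First I would verify well-definedness and complete positivity of $\theta$. Since $P$ is unital with $1_N = p$, the identity $P(a) = p\alpha(a)p$ specialized to $a = p$ yields $p\alpha(p)p = p$; as $\alpha(p)$ is a projection (because $\alpha$ is a unital $*$-endomorphism), this forces $\alpha(p) \geq p$, and by iteration $\{\alpha^n(p)\}$ is increasing in $n$. Its strong limit is an $\alpha$-invariant projection that dominates each $\alpha^n(N)$, hence by the minimality condition (iii) it equals $1_M$. In particular $p\alpha(p) = \alpha(p)p = p$, so for $x \in M^\alpha$
\[
P(\theta(x)) = p\alpha(pxp)p = p\alpha(p)\alpha(x)\alpha(p)p = pxp = \theta(x),
\]
showing $\theta(x) \in H^\infty(N,P)$; complete positivity and unitality of $\theta$ are immediate since it is the compression by the unit of $N$.

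Next I would construct the candidate inverse. Fix a free ultrafilter $\omega \in \beta\mathbb{N}\setminus \mathbb{N}$ and set
\[
\tilde E(y) = \mathrm{w}\text{-}\lim_{n \to \omega} \frac{1}{n}\sum_{k=0}^{n-1}\alpha^k(y), \qquad y \in M.
\]
A routine ergodic-style argument, exploiting $\|\alpha^n(y) - y\| \leq 2\|y\|$, shows that $\tilde E$ is a normal ucp projection from $M$ onto $M^\alpha$. Define $\Phi = \tilde E|_{H^\infty(N,P)}$, which is ucp and lands in $M^\alpha$. For $h \in H^\infty(N,P)$, the combination of $P^k(h) = h$ and $P^k(h) = p\alpha^k(h)p$ gives
\[
\theta(\Phi(h)) = p\tilde E(h)p = \mathrm{w}\text{-}\lim_{n\to\omega}\frac{1}{n}\sum_{k=0}^{n-1} P^k(h) = h,
\]
so $\theta \circ \Phi = \id_{H^\infty(N,P)}$.

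The main obstacle is the converse identity $\Phi \circ \theta = \id_{M^\alpha}$, and this is where minimality really enters through the strong convergence $\alpha^n(p) \nearrow 1_M$. For $x \in M^\alpha$, using $\alpha^k(x) = x$,
\[
\Phi(\theta(x)) = \tilde E(pxp) = \mathrm{w}\text{-}\lim_{n\to\omega}\frac{1}{n}\sum_{k=0}^{n-1}\alpha^k(p)\,x\,\alpha^k(p).
\]
Because $\alpha^k(p) \to 1_M$ strongly, the vectors $\alpha^k(p) x \alpha^k(p)$ converge strongly to $x$, so the ordinary Cesàro means converge to $x$ in the weak operator topology, and therefore so does the $\omega$-limit. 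Hence $\theta$ is a unital completely positive bijection with ucp inverse $\Phi$, i.e.\ a complete order isomorphism of operator systems. The final clause is then immediate from the uniqueness of a von Neumann algebra structure on a given operator system (recalled in the preceding paragraph): the von Neumann subalgebra $M^\alpha \subset M$ is transported by $\theta$ to a von Neumann algebra structure on $H^\infty(N,P)$, which must coincide with the Choi-Effros product, independently of the choice of $\omega$.
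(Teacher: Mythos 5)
Your proof is correct, and it rests on the same two ingredients as the paper's argument -- the strong convergence $\alpha^n(p)\nearrow 1_M$ and Ces\`aro averaging along the powers of $\alpha$ -- but it organizes them differently. The paper proves that $\theta$ is a unital completely positive \emph{isometry of $M^\alpha$ onto} $H^\infty(N,P)$: the isometry comes from $\alpha^n(pxp)=\alpha^n(p)x\alpha^n(p)\to x$ strongly, surjectivity from taking a weak accumulation point $x$ of the ordinary Ces\`aro means $a_n=\frac{1}{n}\sum_{k=0}^{n-1}\alpha^k(a)$ (which lies in $M^\alpha$ and satisfies $pa_np=a$), and the complete positivity of $\theta^{-1}$ is then obtained for free from the general principle that the inverse of a unital completely positive isometric bijection of operator systems is automatically completely positive. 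You instead exhibit the inverse explicitly, as the restriction to $H^\infty(N,P)$ of the ultrafilter--Ces\`aro ucp projection $\tilde E$ of $M$ onto $M^\alpha$, and check the two composition identities directly; this makes the complete positivity of the inverse manifest and avoids the abstract order-isomorphism principle, at the cost of the ultrafilter formalism (which the paper in any case already uses to define the Choi--Effros product). Two minor remarks: the normality you claim for $\tilde E$ is neither needed in your argument nor automatic (pointwise weak limits along an ultrafilter of normal maps need not be normal), so it should be dropped; and in your derivation of $\lim_n\alpha^n(p)=1_M$, if ``generated by $\cup_{n}\alpha^n(N)$'' is read in the unital sense one should also invoke condition (iv) -- the limit projection commutes with all generators, hence is central, and dominates $p$, hence dominates $c(p)=1_M$ -- although the paper records this convergence just before the theorem, so quoting it is also fine.
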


\begin{proof} It suffices to show that $\theta$ is a completely positive isometry of $M^\alpha$ onto $H^\infty(N,P)$ 
because the inverse of a unital completely positive isometry between two operator systems is automatically completely positive. 
Since the proof does not change after taking the tensor product with a matrix algebra, and 
$\theta$ is obviously completely positive, it suffices to show that $\theta$ is an isometry onto $H^\infty(N,P)$. 

For $x\in M^\alpha$, we have $P(pxp)=p\alpha(pxp)p=p\alpha(p)x\alpha(p)p$. 
Since $p\leq \alpha(p)$, we obtain $\theta(x)\in H^\infty(N,P)$. 
The map $\theta$ is obviously a contraction. 
Since the sequence $\{\alpha^n(p)\}_{n=1}^\infty$ converges to $1_M$ in the strong operator topology, the sequence 
$\{\alpha^n(\theta(x))\}_{n=1}^\infty$ converges to $x\in M^\alpha$ in the strong operator topology too, and 
so $\theta$ is an isometry. 
It remains to show that $\theta$ is a surjection. 
For $a\in H^\infty(N,P)$, we set 
\begin{equation*}a_n=\frac{1}{n}\sum_{k=0}^{n-1}\alpha^k(a),\end{equation*}
and we choose an accumulation point $x$ of the sequence $\{a_n\}_{n=1}^\infty$ in the weak operator topology. 
Then $x\in M^\alpha$. Since 
\begin{equation*}pa_np=\frac{1}{n}\sum_{k=0}^{n-1}p\alpha^k(a)p=\frac{1}{n}\sum_{k=0}^{n-1}P^k(a)=a,\end{equation*} 
we obtain $\theta(x)=a$. 
\end{proof}

The following statement is known in concrete examples where the martingale convergence theorem (commutative or noncommutative) 
is available (see \cite{I2004},\cite{KV83}). 
Thanks to the dilation theory, we are able to prove it in the general case. 

\begin{corollary} For any $a,b\in H^\infty(N,P)$, the sequence $\{P^n(ab)\}_{n=1}^\infty$ converges to 
the Choi-Effros product $a\circ b$ in the strong operator topology. 
\end{corollary}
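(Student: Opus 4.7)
My plan is to use the preceding theorem to lift the problem from $H^\infty(N,P)$ to $M^\alpha$, where the semigroup $\alpha$ acts trivially on the relevant elements and the result becomes a direct computation with the projections $\alpha^n(p)$.

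First I would invoke the theorem to choose $\tilde{a},\tilde{b}\in M^\alpha$ with $\theta(\tilde{a})=a$ and $\theta(\tilde{b})=b$, i.e.\ $p\tilde{a}p=a$ and $p\tilde{b}p=b$. Since $\theta$ is a von Neumann algebra isomorphism from $M^\alpha$ onto $H^\infty(N,P)$ (with the latter given its Choi--Effros product), the product in the boundary can be expressed as
\begin{equation*}
a\circ b=\theta(\tilde{a}\tilde{b})=p\tilde{a}\tilde{b}p.
\end{equation*}

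Next I would expand $P^n(ab)=p\alpha^n(ab)p$ using $ab=p\tilde{a}p\tilde{b}p$ and the fact that $\alpha^n$ is a unital homomorphism. Since $\alpha^n(\tilde{a})=\tilde{a}$ and $\alpha^n(\tilde{b})=\tilde{b}$ by $\alpha$-invariance, and since $p\leq \alpha^n(p)$ (so that $p\alpha^n(p)=\alpha^n(p)p=p$), the expression collapses to
\begin{equation*}
P^n(ab)=p\,\tilde{a}\,\alpha^n(p)\,\tilde{b}\,p.
\end{equation*}
This is the key algebraic simplification: the only remaining dependence on $n$ is in the middle projection $\alpha^n(p)$.

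Finally, I would appeal to the fact recorded just after the definition of minimal dilation, namely that $\{\alpha^n(p)\}_{n\geq 0}$ increases to $1_M$ in the strong operator topology. Since multiplication in $M$ is jointly strongly continuous on norm-bounded sets, we get
\begin{equation*}
P^n(ab)=p\,\tilde{a}\,\alpha^n(p)\,\tilde{b}\,p\xrightarrow{\;\mathrm{SOT}\;}p\,\tilde{a}\,\tilde{b}\,p=a\circ b,
\end{equation*}
which is the claim. There is no real obstacle here beyond the initial lifting step; the whole point of the argument is that Arveson's observation converts a subtle ergodic-type statement about $P$ into a trivial continuity statement about $\alpha^n(p)\nearrow 1_M$.
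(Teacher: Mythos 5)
Your argument is correct and is essentially the paper's own proof: lift $a,b$ to fixed points $x=\theta^{-1}(a)$, $y=\theta^{-1}(b)$, use $p\le\alpha^n(p)$ to collapse $P^n(ab)=p\alpha^n(pxpyp)p$ to $px\,\alpha^n(p)\,yp$, and let $\alpha^n(p)\nearrow 1_M$ strongly to get $pxyp=\theta(xy)=a\circ b$. No gaps worth noting.
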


\begin{proof} Let $x=\theta^{-1}(a)$, $y=\theta^{-1}(b)$. 
Then 
\begin{equation*}P^n(ab)=p\alpha^n(pxpyp)p=p\alpha^n(p)x\alpha^n(p)y\alpha^n(p)p=px\alpha^n(p)yp,\end{equation*}
which converges to $pxyp=\theta(xy)$ in the strong operator topology. 
\end{proof} 

The following example was suggested by Bill. 
Let $H^2$ be the Hardy space of the unit disk, and let $p$ be the projection from $L^2(\T)=\ell^2(\Z)$ onto $H^2$. 
Let $v\in B(H^2)$ be the unilateral shift, and let $u\in B(L^2(\T))$ be the bilateral shift. 
We introduce a normal ucp map $P$ of $N=B(H^2)$ by $P(a)=v^*av$. 
Then the space of harmonic elements $H^\infty(N,P)$ consists of the Toeplitz operators $T_f$, $f\in L^\infty(\T)$ 
(see \cite[Theorem 4.2.4]{SCST}). 
The minimal dilation of $P$ is given by $(M=B(L^2(\T)), p, \alpha=\Ad u^*)$, and 
the fixed point algebra $M^\alpha$ is $L^\infty(\T)$. 
In summary, the map $\theta:L^\infty(\T)\ni f\mapsto T_f\in H^\infty(N,P)$ is a completely positive order isomorphism 
between the two operator systems, and $\{{v^*}^nT_fT_gv^n\}_{n=1}^\infty$ converges to $T_{fg}$ 
in the strong operator topology for any $f,g\in L^\infty(\T)$.  

We end this note with an example coming from random walks on discrete groups discussed in \cite{I2004}. 
Let $G$ be a discrete group, and let $\mu$ be a probability measure on $G$ whose support $S$ generates $G$ as a semigroup. 
We define a ucp map $\cP_\mu$ acting on $\ell^\infty(G)$ by the right convolution operator $\cP_\mu(f)=f*\check{\mu}$ where 
$\check{\mu}(g)=\mu(g^{-1})$. 
Then $\cP_\mu$ gives rise to a random walk on $G$ with the transition probability $\mathrm{p}(g,h):=\mathrm{Pr}(X_{n+1}=h|X_n=g)$ 
given by $\mu(g^{-1}h)$. 
We denote by $H^\infty(G,\mu)$ the space of bounded harmonic functions for $\cP_\mu$ and by $(\partial G,\nu)$ the 
Poisson boundary with the harmonic measure for $\cP_\mu$ (see \cite{K92},\cite{KV83}) . 

As in \cite{I2004}, we extend $\cP_\mu$ to $N=B(\ell^2(G))$ by 
\begin{equation*}\cQ_\mu(a)=\sum_{g\in G}\mu(g)\rho_ga\rho_g^{-1},\end{equation*}
where $\rho$ is the right regular representation. 
Then the noncommutative Poisson boundary for $\cQ_\mu$ is the boundary crossed product $L^\infty(\partial G,\nu)\rtimes G$ 
(see \cite{I2004},\cite{JN2007}). 
Let $\lambda$ be the left regular representation of $G$. 
Then $H^\infty(B(\ell^2(G)),\cQ_\mu)$ is spanned by $H^\infty(G,\mu)\lambda_G$.

In the rest, we identify the minimal dilation $(M,p,\alpha)$ of $P=\cQ_\mu$, and give a new description of 
the boundary crossed product.  
Let $\theta:M^\alpha \ni x\mapsto pxp\in H^\infty(N,P)$ be as before. 
Since $\lambda_g$ is a unitary in the multiplicative domain of $P$, it is easy to show the following lemma.  

\begin{lemma}\label{unitary} Let $u_g=\theta^{-1}(\lambda_g)$. Then $\{u_g\}_{g\in G}$ is a unitary representation of $G$ in $M^\alpha$ 
commuting with $p$. 
\end{lemma}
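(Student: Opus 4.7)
The plan is to exploit the fact, highlighted by the author, that each $\lambda_g$ lies in the multiplicative domain of $P=\cQ_\mu$. First I verify that $u_g$ is well defined, i.e.\ that $\lambda_g\in H^\infty(N,P)$: since the right and left regular representations commute,
\[
P(\lambda_g)=\sum_{h\in G}\mu(h)\rho_h\lambda_g\rho_h^{-1}=\lambda_g\sum_{h\in G}\mu(h)=\lambda_g.
\]
Because $\lambda_g$ and its image $P(\lambda_g)=\lambda_g$ are both unitaries, the Kadison--Schwarz inequalities saturate, so $\lambda_g$ lies in the multiplicative domain of $P$. This is preserved under every iterate $P^n$, and hence, passing to the weak Cesàro limit, is inherited by the ucp projection $E$ defining the Choi--Effros product on $H^\infty(N,P)$. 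Consequently $\lambda_g\circ a=\lambda_g a$ and $a\circ\lambda_g=a\lambda_g$ for every $a\in H^\infty(N,P)$; in particular $\lambda_g\circ\lambda_h=\lambda_{gh}$.

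Next I deduce the group law. By the theorem just proved, $\theta^{-1}\colon H^\infty(N,P)\to M^\alpha$ is a unital completely positive order isomorphism, and since an operator system carries at most one von Neumann algebra structure, $\theta^{-1}$ intertwines the Choi--Effros product on $H^\infty(N,P)$ with the operator product on $M^\alpha$. Therefore
\[
u_g u_h=\theta^{-1}(\lambda_g\circ\lambda_h)=\theta^{-1}(\lambda_{gh})=u_{gh},
\]
and, using that $\theta$ is $*$-preserving together with $\theta(1_M)=p=1_N$, we get $u_e=1_M$ and $u_g^*=\theta^{-1}(\lambda_g^{-1})=u_{g^{-1}}$. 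Thus $\{u_g\}_{g\in G}$ is a genuine unitary representation of $G$ in $M^\alpha$.

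Finally, I show $u_g p=p u_g$. From $p u_g p=\theta(u_g)=\lambda_g$ and $p u_g^* p=\lambda_{g^{-1}}$, together with $\lambda_g\lambda_{g^{-1}}=\lambda_{g^{-1}}\lambda_g=p$ in $N=pMp$, I compute
\[
\|(1-p)u_g p\|^2=\|p u_g^*(1-p)u_g p\|=\|p u_g^* u_g p-(p u_g^* p)(p u_g p)\|=\|p-p\|=0,
\]
using that $u_g$ is a unitary in $M$; an identical argument gives $\|p u_g(1-p)\|^2=0$. Hence $u_g p=p u_g p=p u_g$. The only conceptually delicate step is the second paragraph: it is the transfer of multiplicativity from the Choi--Effros product to $M^\alpha$, via uniqueness of the von Neumann algebra structure on the operator system $H^\infty(N,P)$, that converts the multiplicative-domain property of $\lambda_g$ into the actual group-homomorphism statement; everything else is bookkeeping with $\theta(x)=pxp$ and $\alpha^n(p)\nearrow 1_M$.
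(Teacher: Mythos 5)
Your proof is correct and follows exactly the route the paper intends: the paper leaves the lemma as an easy consequence of $\lambda_g$ being a unitary in the multiplicative domain of $P=\cQ_\mu$, which is precisely the engine of your argument, combined with the Theorem's identification of $M^\alpha$ with $(H^\infty(N,P),\circ)$ via $\theta$. The remaining steps (harmonicity of $\lambda_g$, transfer of the product through $\theta^{-1}$ by uniqueness of the von Neumann algebra structure, and the $C^*$-identity computation giving $u_gp=pu_gp=pu_g$) are sound and fill in the details the paper omits.
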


We may assume that $M=B(H)$ and $p$ is a projection onto a closed subspace $H_0$ of $H$ identified with $\ell^2(G)$. 
By minimality, the Hilbert space $H$ is spanned by 
\begin{equation*}\alpha^n(e_{g_n,h_n})\alpha^{n-1}(e_{g_{n-1},h_{n-1}})\cdots \alpha(e_{g_1,h_1})\delta_{g_0},\end{equation*} 
where $\{e_{g,h}\}_{g,h\in G}$ is the system of matrix units in $B(\ell^2(G))$ with respect to the orthonormal basis 
$\{\delta_g\}_{g\in G}$. 
Thanks to Lemma \ref{unitary} and $e_{g,h}=\delta_g\lambda_{gh^{-1}}$, we have $\alpha^n(e_{g,h})=\alpha^n(\delta_g)u_{gh^{-1}}$. 
Thus we see that $H$ is spanned by 
\begin{equation*}\zeta^n(g_0,g_1,\ldots,g_n)=\alpha^n(\delta_{g_n})\alpha^{n-1}(\delta_{g_{n-1}})\cdots \alpha(\delta_{g_1})\delta_{g_0}.\end{equation*} 
Direct computation shows
\begin{align*}
\lefteqn{\inpr{\zeta^n(g_0,g_1,\ldots,g_n)}{\zeta^n(h_0,h_1,\ldots,h_n)}} \\
&=\delta_{g_n,h_n}
\inpr{\alpha^{n-1}(p\alpha(\delta_{g_n})p)\zeta^{n-1}(g_0,g_1,\ldots,g_{n-1})}{\zeta^{n-1}(h_0,h_1,\ldots,h_{n-1})}\\
&=\delta_{g_n,h_n}\inpr{\alpha^{n-1}(\delta_{h_{n-1}}P(\delta_{g_n})\delta_{g_{n-1}})
\zeta^{n-2}(g_0,g_1,\ldots,g_{n-2})}{\zeta^{n-2}(h_0,h_1,\ldots,h_{n-2})}\\
&=\delta_{g_n,h_n}\delta_{g_{n-1},h_{n-1}}\mathrm{p}(g_{n-1},g_n)\\
&\times \inpr{\alpha^{n-1}(\delta_{g_{n-1}})\zeta^{n-2}(g_0,g_1,\ldots,g_{n-2})}{\zeta^{n-2}(h_0,h_1,\ldots,h_{n-2})},
\end{align*}
and therefore 
\begin{align*}
\lefteqn{\inpr{\zeta^n(g_0,g_1,\ldots,g_n)}{\zeta^n(h_0,h_1,\ldots,h_n)}} \\
 &=\delta_{g_0,h_0}\delta_{g_1,h_1}\cdots \delta_{g_n,h_n}\mathrm{p}(g_0,g_1)\mathrm{p}(g_1,g_2)\cdots \mathrm{p}(g_{n-1},g_n).
\end{align*}
We also have 
\begin{equation*}\zeta^n(g_0,g_1,\ldots,g_n)=\sum_{g\in G}\zeta^{n+1}(g_0,g_1,\ldots,g_n,g).\end{equation*}
Now we can see that $H$ is identified with the $L^2$-space over the path space $G^\infty$. 
Let $C^n(g_0,g_1,\ldots,g_n)$ be the cylinder set 
\begin{equation*}C^n(g_0,g_1,\ldots,g_n)=\{(x_n)_{n=0}^\infty\in G^\infty;\; x_i=g_i,\; 0\leq i\leq n\},\end{equation*}
and let $m$ be the measure on $G^\infty$ determined by $m(C^0(g))=1$ and 
\begin{equation*}m(C^n(g_0,g_1,\ldots,g_n))=\mathrm{p}(g_0,g_1)\mathrm{p}(g_1,g_2)\cdots \mathrm{p}(g_{n-1},g_n).\end{equation*}
Then $L^2(G^\infty,m)\ni 1_{C^{n}(g_0,g_1,\ldots,g_n)}\mapsto \zeta^n(g_0,g_1,\ldots,g_n)\in H$ 
gives a unitary operator, and we identify the two Hilbert spaces in what follows. 
Note that $\delta_g\in H_0$ is identified with $1_{C^0(g)}$. 
Since 
\begin{equation*}u_g\zeta^n(g_0,g_1,\ldots,g_n)=\zeta^n(gg_0,gg_1,\ldots,gg_n),\end{equation*} 
we have $u_g\xi((x_n))=\xi((g^{-1}x_n))$ for all $\xi\in L^2(G^\infty,m)$. 

Let $H_n$ be the closed subspace of $H$ spanned by $\{1_{C^n(g_0,g_1,\ldots,g_n)}\}_{g_0,g_1,\ldots,g_n\in G}$, 
and let $p_n$ be the projection onto $H_n$. 
Then we have $p=p_0$. 
Let $j_k:\ell^\infty(G)\rightarrow B(H)$ be the representation of $\ell^\infty(G)$ defined by 
$j_k(f)\xi((x_n))=f(x_k)\xi((x_n))$. 
We identify $f\in \ell^\infty(G)$ and $\lambda_g$ in $N$ with $j_0(f)p$ and $u_gp$ in $M$. 
Then we have $\alpha^k(j_0(f)p)=j_k(f)p_k$. 
Since $M$ is generated by $\cup_{k\geq 0} \alpha^k(N)$, and $N$ is spanned by $\ell^\infty(G)\lambda_G$, 
the homomorphism $\alpha$ is determined by the condition 
$\alpha(u_g)=u_g$ and $\alpha(j_k(f)p_k)=j_{k+1}(f)p_{k+1}$. 

Since $\alpha$ is a unital homomorphism of $M=B(H)$, it is implemented by a Cuntz algebra representation. 
Let $T:G^\infty\rightarrow G^\infty$ be the time shift $(Tx)_n=x_{n+1}$. 
For each $g\in S$, we set 
\begin{equation*}S_g\xi((x_n))=\frac{\delta_{x_0g,x_1}}{\sqrt{\mu(g)}}\xi\circ T((x_n)),\quad \xi\in L^2(G^\infty,m).\end{equation*}
Then $S_g\in B(H)$ is an isometry with the adjoint operator given by 
\begin{equation*}S_g^*\xi((x_n))=\sqrt{\mu(g)}\xi(x_0g^{-1},x_0,x_1,\cdots),\quad \xi\in L^2(G^\infty,m),\end{equation*}
and so we have $S_gS_g^*\xi((x_n))=\delta_{x_0g,x_1}\xi((x_n))$. 
The range projections $\{S_gS_g^*\}_{g\in S}$ are mutually orthogonal, and the summation converges to $1_M$. 
Thus $\{S_g\}_{g\in S}$ satisfy the Cuntz algebra $\cO_n$ relation with $n=\# S$. 
Let  
\begin{equation*}\beta(x)=\sum_{g\in S}S_gxS_g^*,\quad x\in M.\end{equation*}
Then it is easy to see $\beta(u_g)=u_g$, $\beta(\eta)=\eta\circ T$ for $\eta\in L^\infty(G^\infty,m)$, and 
$\beta(p_k)=p_{k+1}$. 
Thus we get $\alpha=\beta$. 

The above argument shows that the fixed point algebra $M^\alpha$ is the commutant of the Cuntz algebra 
$\cO_n=C^*\{S_g\}_{g\in S}$. 
We specify a state of $\cO_n$ giving this representation. 
We claim that $1_{C^0(e)}$ is a separating vector for $M^\alpha$. 
Recall that $H^\infty(N,P)$ is spanned by $H^\infty(G,\mu)\lambda_G$. 
For $f\in H^\infty(G,\mu)$, we have 
\begin{align*}\lefteqn{
\inpr{\theta^{-1}(f\lambda_g)1_{C^0(e)}}{1_{C^0(e)}}
=\lim_{k\to\infty}\inpr{\alpha^k(j_0(f)p)u_g1_{C^0(e)}}{1_{C^0(e)}}
}\\
&=\lim_{k\to\infty}\inpr{j_k(f)p_k1_{C^0(g)}}{1_{C^0(e)}}=\delta_{g,0} \lim_{k\to\infty}\inpr{j_k(f)1_{C^0(e)}}{1_{C^0(e)}}\\
 &=\delta_{g,0} \lim_{k\to\infty}\sum_{h\in G}\mathrm{p}^{(k)}(e,h)f(h)
 =\delta_{g,0}f(e),
\end{align*}
where $\mathrm{p}^{(k)}(e,g)$ is the $k$-step transition probability. 
This shows that $1_{C^0(e)}$ induces a faithful normal state of the boundary crossed product 
$L^\infty(\partial G,\nu)\rtimes G$ corresponding to the harmonic measure $\nu$ (see \cite{I2004}), 
and so the claim is proved. 
In consequence $1_{C^0(e)}$ is cyclic for $\cO_n=C^*\{S_g\}_{g\in S}$. 
We denote by $\omega_\mu$ the state of $\cO_n$ given by $1_{C^0(e)}$. 
Note that we have $S_g^*1_{C^{0}(h)}=\sqrt{\mu(g)}1_{C^0(hg)}$. 

\begin{corollary} There exists a state $\omega_\mu$ of the Cuntz algebra $\cO_n=C^*\{S_g\}_{g\in S}$ 
given by 
\begin{equation*}\omega_\mu(S_{g_1}S_{g_2}\cdots S_{g_k}S_{h_l}^*S_{h_{l-1}}^*\cdots S_{h_1}^*)
=\prod_{i=1}^k\mu(g_i)^{1/2}\prod_{j=1}^l\mu(h_j)^{1/2}\delta_{g_1g_2\cdots g_k,h_1h_2\cdots h_l},\end{equation*}
and the boundary crossed product $L^\infty(\partial G,\nu)\rtimes G$ is isomorphic to $\pi_{\omega_\mu}(\cO_n)'$, 
where $\pi_{\omega_\mu}$ is the GNS representation of $\omega_\mu$. 
\end{corollary}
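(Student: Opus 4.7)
The strategy is two-fold: first extract the explicit formula for $\omega_\mu$ by a direct computation from the given actions of $S_g$ and $S_g^*$ on cylinder indicators, and then obtain the commutant identification as an immediate consequence of the facts already assembled in the excerpt -- namely that $M^\alpha=\cO_n'$ in $B(H)$, that $1_{C^0(e)}$ is cyclic for $\cO_n$, and that $M^\alpha\cong L^\infty(\partial G,\nu)\rtimes G$.

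\textbf{Step 1 (the formula).} To compute $\omega_\mu$ on a generic $*$-monomial I would iterate the identity $S_g^*1_{C^0(h)}=\sqrt{\mu(g)}\,1_{C^0(hg)}$ recalled just before the statement, obtaining by induction on $l$
\[
S_{h_l}^*\cdots S_{h_1}^*\,1_{C^0(e)}=\prod_{j=1}^{l}\mu(h_j)^{1/2}\cdot 1_{C^0(h_1 h_2\cdots h_l)},
\]
and analogously for the $g_i$'s. Writing
\[
\omega_\mu\bigl(S_{g_1}\cdots S_{g_k}S_{h_l}^*\cdots S_{h_1}^*\bigr)=\bigl\langle S_{h_l}^*\cdots S_{h_1}^*\,1_{C^0(e)},\;S_{g_k}^*\cdots S_{g_1}^*\,1_{C^0(e)}\bigr\rangle
\]
and using the orthogonality $\langle 1_{C^0(g)},1_{C^0(h)}\rangle=\delta_{g,h}$, the asserted formula drops out. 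The Cuntz relations $S_g^*S_h=\delta_{g,h}\cdot1$ and $\sum_{g\in S}S_gS_g^*=1$ (which follow by direct verification from the definition of $S_g$) reduce every $*$-monomial in $\{S_g\}_{g\in S}$ to the stated normal form, so the formula determines $\omega_\mu$ on all of $\cO_n$; positivity and normalization are automatic from the vector-state definition.

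\textbf{Step 2 (the commutant identification).} Using $\sum_{g\in S}S_gS_g^*=1$ and the Cuntz relations, a short calculation shows that $\alpha(x)=\sum_{g\in S}S_gxS_g^*$ fixes $x$ if and only if $x$ commutes with each $S_g$; hence $M^\alpha=\cO_n'$ as subalgebras of $B(H)$. The excerpt already establishes that $1_{C^0(e)}$ is cyclic for $\cO_n$ acting on $H$. Consequently the given unital $*$-representation of $\cO_n$ on $H$, together with the cyclic unit vector $1_{C^0(e)}$ and its associated vector state $\omega_\mu$, is unitarily equivalent to the GNS triple $(\pi_{\omega_\mu},H_{\omega_\mu},\xi_{\omega_\mu})$ by the uniqueness clause of the GNS construction. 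Under this unitary equivalence the commutant $\cO_n'=M^\alpha$ is carried onto $\pi_{\omega_\mu}(\cO_n)'$. Combining with the identification $M^\alpha\cong H^\infty(N,P)\cong L^\infty(\partial G,\nu)\rtimes G$ recalled earlier (via Bill's observation and \cite{I2004}), we conclude that $L^\infty(\partial G,\nu)\rtimes G\cong\pi_{\omega_\mu}(\cO_n)'$.

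\textbf{Main obstacle.} The computations themselves are routine. The only conceptual point that must be handled correctly is the transfer from the concrete representation of $\cO_n$ on $L^2(G^\infty,m)$ to the abstract GNS representation $\pi_{\omega_\mu}$, which rests on cyclicity of $1_{C^0(e)}$ for $\cO_n$; this cyclicity has already been extracted in the excerpt from the fact that $1_{C^0(e)}$ induces a faithful normal state on $M^\alpha$. Thus no ingredient beyond what has been assembled is needed.
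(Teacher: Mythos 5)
Your proposal is correct and follows essentially the same route as the paper, which presents this corollary as a direct summary of the preceding computations: the formula comes from iterating $S_g^*1_{C^0(h)}=\sqrt{\mu(g)}\,1_{C^0(hg)}$ on the cyclic vector, and the commutant identification comes from $M^\alpha=\cO_n'$, the cyclicity of $1_{C^0(e)}$, GNS uniqueness, and the identification $M^\alpha\cong H^\infty(N,P)\cong L^\infty(\partial G,\nu)\rtimes G$. You have merely made explicit the steps the paper leaves implicit, so no gap remains.
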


\def\refname{References for Appendix}

\end{document}